\newtheorem{theo}{Theorem}
\newtheorem{lem}[theo]{Lemma}
\newtheorem{coro}[theo]{Corollary}
\newtheorem{fact}[theo]{Fact}
\theoremstyle{definition}
\newtheorem{defi}[theo]{Definition}
\theoremstyle{remark}
\newtheorem{rmk}[theo]{Remark}
\crefname{theo}{Theorem}{Theorems}
\crefname{lem}{Lemma}{Lemmas}
\crefname{coro}{Corollary}{Corollaries}
\crefname{fact}{Fact}{Facts}
\crefname{defi}{Definition}{Definitions}
\crefname{rmk}{Remark}{Remarks}
\newcommand*\N{\mathbb{N}}		
\newcommand*\sth{:}				
\newcommand*\map{\colon}
\newcommand*\lang{{\mathtt{L}}}	 
\newcommand*\Comm{\mathrm{Comm}} 
\renewcommand\qedsymbol{\setbox0=\hbox{\ \ \ \footnotesize{\normalfont Q.E.D.}}\kern\wd0 \strut \hfill \kern-\wd0 \box0}
\begin{document}

\title{A short note on the Massicot-Wagner method}          

\author{Arturo Rodr\'\i guez Fanlo} 
\address{Autonomous University of Madrid, Ciudad Universitaria de Cantoblanco, 28049, Madrid, Spain}
\email{arturo.rodriguez@uam.es}

\date{\today}

\begin{abstract}
We provide a general abstract statement of the Massicot-Wagner method: our main result is an asymmetric version (i.e. a version for group actions) of the recursive Massicot-Wagner argument.  
\end{abstract}

\subjclass[2010]{11P70, 20A15, 03C98}
\keywords{Approximate subgroups, Lie models}

\maketitle

\section*{Introduction}
The Massicot-Wagner method is a technique used in additive combinatorics and model theory to show that an approximate subgroup has a locally compact model (Lie model). This technique is now common practice in the area --- e.g., see 
\cite{
carolino2015structure,
krupinski2019amenability,
conant2020smalltripling,
hrushovski2022amenability,
machado2023smalldoubling,
jing2023measuredoubling,
krupinski2024approximaterings,
machado2025closed,
conant2025compactifications,
rodriguez2025rough}.  

Locally compact models (Lie models) were introduced by Hrushovski in his seminal paper \cite{hrushovski2011stable} as a fundamental connection between approximate subgroups and compact neighbourhood of the identity of locally compact groups. The existence of Lie models for pseudo-finite approximate subgroups \cite[Theorem~4.2]{hrushovski2011stable} was the starting point for the complete classification of finite approximate subgroups by Breuillard, Green and Tao \cite{breuillard2012structure}. The proof of \cite[Theorem~4.2]{hrushovski2011stable} strongly relies on model-theoretic techniques from stability theory. 

Based on the work of Sanders \cite{sanders2010nonabelian} and Croot-Sisask \cite{croot2010probabilistic}, an alternative combinatorial proof of the Lie Model Theorem  that avoids model theory was presented in \cite{breuillard2012structure}. In \cite{massicot2015approximate}, Massicot and Wagner substantially improved this combinatorial proof. The Massicot-Wagner\footnote{We are facing Stigler's law of eponymy: historically, it should probably be called the Sanders-Croot-Sisask-Massicot-Wagner method (especially \cref{l:original basic massicot wagner}).} method is summarised in two fundamental results:

\begin{lem}[Basic Massicot-Wagner {\cite[Theorem~12]{massicot2015approximate}}] \label{l:original basic massicot wagner} Let $G$ be a group and let $\Lambda\subseteq G$ be an approximate subgroup. Let $(G,\mathcal{A},\mu)$ be a {content space}\footnote{See \cref{d:content}.} space invariant by left translations, with $\Lambda\in\mathcal{A}$ and $0<\mu(\Lambda)<\infty$. Then, for any $n\in\N$, there is an approximate subgroup $S\subseteq\Lambda^4$ commensurable with $\Lambda$ such that $S^n\subseteq \Lambda^4$.  

\emph{Model theoretic remark:} the set $S$ can be chosen definable in the language $(G,\cdot,\Lambda)$.
\end{lem}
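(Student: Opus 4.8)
The plan is to produce $S$ as an \emph{almost-stabilizer} of $\Lambda^2$. Fix a finite set $F$ with $\Lambda^2\subseteq F\Lambda$ and put $k:=|F|$; by left-invariance and finite additivity of $\mu$ one has $\mu(\Lambda^m)\le k^{m-1}\mu(\Lambda)<\infty$ for all $m\ge1$ (I tacitly assume $\mathcal A$ contains every set used below --- the powers of $\Lambda$ and the almost-stabilizers --- as in the standard content-space setup). For $A\in\mathcal A$ with $0<\mu(A)<\infty$ and $\delta>0$ put
\[
S_\delta(A):=\{\,g\in G:\mu(gA\triangle A)\le\delta\,\mu(A)\,\}.
\]
The first step is soft bookkeeping: using only left-invariance, monotonicity and finite additivity I would check $e\in S_\delta(A)=S_\delta(A)^{-1}$, the triangle inequality $S_\delta(A)\cdot S_{\delta'}(A)\subseteq S_{\delta+\delta'}(A)$, and $S_\delta(A)\subseteq AA^{-1}$ for $\delta<1$ (since $\mu(gA\cap A)\ge(1-\delta)\mu(A)>0$ forces $gA\cap A\neq\varnothing$). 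Taking $A:=\Lambda^2$, so $AA^{-1}=\Lambda^4$, the candidate will be $S:=S_{1/(4n)}(\Lambda^2)$; then $e\in S=S^{-1}$, $S\subseteq\Lambda^4$, and, iterating the triangle inequality, $S^n\subseteq S_{1/4}(\Lambda^2)\subseteq\Lambda^4$ and $S^2\subseteq S_{1/2}(\Lambda^2)\subseteq\Lambda^4$.

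Next I would reduce ``$S$ is an approximate subgroup commensurable with $\Lambda$'' to one positivity statement. Put $S_0:=S_{1/(8n)}(\Lambda^2)\subseteq\Lambda^4$, so that $S_0^{\,2}\subseteq S$, and aim to prove $\mu(S_0)>0$. Granting this, a Ruzsa covering argument closes the gap: pick a maximal family $g_1,\dots,g_r\in\Lambda^4$ with $g_1S_0,\dots,g_rS_0$ pairwise disjoint; these sets lie in $\Lambda^8$ and $\mu(\Lambda^8)<\infty$, so $r\le\mu(\Lambda^8)/\mu(S_0)<\infty$, and maximality gives $\Lambda^4\subseteq\bigcup_{i=1}^r g_iS_0^{\,2}\subseteq\bigcup_{i=1}^r g_iS$. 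Therefore $S^2\subseteq\Lambda^4\subseteq\bigcup_i g_iS$, so $S$ is an approximate subgroup; and $\Lambda\subseteq\Lambda^4\subseteq\bigcup_i g_iS$, while $S\subseteq\Lambda^4\subseteq F'\Lambda$ for a finite $F'$ (as $\Lambda$ is an approximate subgroup), so $S$ and $\Lambda$ are commensurable. For the model-theoretic remark I would observe that $S$ is defined by a single inequality between $\mu$-measures of sets quantifier-free definable from $\Lambda$, which is a definable condition on $g$ whenever $\mu$ is (an ultralimit of normalised) counting measure; hence $S$ is definable in $(G,\cdot,\Lambda)$.

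The hard part is the positivity $\mu(S_0)>0$: this is the Croot--Sisask and Sanders input, in the finitely-additive form isolated by Massicot and Wagner, and I would prove more generally that $S_\delta(\Lambda^2)$ is non-null for every $\delta>0$. The tool is $L^2$ almost-periodicity: take a suitable self-convolution $f=\mathbf{1}_{\Lambda}*\cdots*\mathbf{1}_{\Lambda}$ --- supported in a bounded power of $\Lambda$ and roughly flat, with $\|f\|_1$ comparable to $\|f\|_\infty\,\mu(\Lambda)$ --- then use Croot--Sisask's probabilistic sampling of points of $\Lambda$ to find a positive-measure set of $g$ with $\|g\cdot f-f\|_2$ small, and finally a Chebyshev/level-set estimate to pass from this approximate left-invariance of $f$ to the inequality $\mu(g\Lambda^2\triangle\Lambda^2)\le\delta\,\mu(\Lambda^2)$ (absorbing the auxiliary convolution factors is what makes $\Lambda^2$, rather than $\Lambda$, the natural set to stabilize). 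I expect this to be the only real obstacle --- the earlier steps being routine manipulations with the content together with a covering argument --- and within it the genuinely delicate point, which is exactly Massicot and Wagner's contribution, is to arrange the almost-periodicity estimate \emph{recursively} so that its dependence on $n$ is mild rather than of tower type.
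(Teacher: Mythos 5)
Your bookkeeping steps (symmetry of $S_\delta$, the triangle law $S_\delta(A)\cdot S_{\delta'}(A)\subseteq S_{\delta+\delta'}(A)$, $S_\delta(\Lambda^2)\subseteq\Lambda^4$, and the Ruzsa covering extraction) are fine, but the two steps carrying all the weight fail in the setting of the statement. First, the hypotheses give only a finitely additive content on a ring $\mathcal{A}$ with $\Lambda\in\mathcal{A}$; nothing places the almost-stabilizers $S_\delta(\Lambda^2)$ (nor, strictly, $\Lambda^2$ or $\Lambda^8$) in $\mathcal{A}$, so the pivotal quantity $\mu(S_0)$ and the bound $r\le\mu(\Lambda^8)/\mu(S_0)$ are not even defined; this cannot be absorbed into ``the standard content-space setup'', since an arbitrary invariant content on the sets generated from translates of $\Lambda$ need not assign any value to a set cut out by inequalities between $\mu$-values. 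Second, your proof of the crucial positivity $\mu(S_0)>0$ invokes Croot--Sisask $L^2$ almost-periodicity, which requires genuine integration theory and probabilistic sampling of points of $\Lambda$ --- machinery unavailable for a merely finitely additive $\mu$. This is exactly the obstruction that Massicot--Wagner's Theorem~12 removes: their contribution is not, as your closing sentence suggests, a recursive sharpening of the almost-periodicity estimate, but the elimination of the Croot--Sisask input altogether in favour of a purely finitely additive pigeonhole.

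The argument the paper runs (see \cref{l:basic massicot wagner}, specialised to $G$ acting on itself with $A=B=\Lambda$, together with \cref{l:massicot wagner systems from means}) is structurally different: one pigeonholes on $f(k)=\inf\{\mu(W\Lambda): W\ \text{a finite intersection of translates of}\ \Lambda,\ W\ \text{large at level}\ k\}$ to find a level $k$ and a witness $W$ with $\mu(W\Lambda)$ nearly minimal, and then takes $S$ to be (essentially) $\{g: gW\cap W\ \text{or}\ g^{-1}W\cap W\ \text{is still large}\}\cup\{1\}$. For such $g$ one gets $\mu(gW\Lambda\cap W\Lambda)>(1-\tfrac1n)\mu(W\Lambda)$, whence $S^n\subseteq\{g: gW\Lambda\cap W\Lambda\neq\emptyset\}\subseteq\Lambda^4$, while largeness of $S$ gives commensurability with $\Lambda$ (and then \cref{l:symmetric sets of the commensurator} makes $S$ an approximate subgroup). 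The only sets ever measured are of the form $W\Lambda$ with $W$ a finite intersection of translates of $\Lambda$, which is why finite additivity suffices; and $S$ is built from definable data (the sets $gW\cap W$ and covering conditions), which is how the model-theoretic remark is obtained. By contrast, your $S_\delta$, being defined by conditions on $\mu$-values, is not definable in $(G,\cdot,\Lambda)$ --- the measure is not part of that language, even in the pseudofinite counting-measure case --- so the definability claim would also fail along your route.
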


\begin{theo}[Recursive Massicot-Wagner {\cite[Theorem~1]{massicot2015approximate}}] \label{t:original recursive massicot wagner} Let $G$ be a group and let $\Lambda\subseteq G$ be an approximate subgroup. Let $(G,\mathcal{A},\mu)$ be a content space invariant by left translations, with $\Lambda\in\mathcal{A}$ and $0<\mu(\Lambda)<\infty$. Then, $\Lambda^4$ has a locally compact model\footnote{See \cref{d:lie model}.}.

\emph{Model-theoretic remark:} the locally compact model of $\Lambda^4$ can be taken to be definable in the language $(G,\cdot,\Lambda)$ with countably many parameters.
\end{theo}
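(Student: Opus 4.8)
The plan is to derive the theorem from \cref{l:original basic massicot wagner} by an iteration and then to feed the resulting chain of approximate subgroups into the standard logic-topology construction of a model \cite{hrushovski2011stable}. For the iteration, note first that \cref{l:original basic massicot wagner} applies not only to $\Lambda$ but to any approximate subgroup commensurable with it, since the content restricts to a left-invariant content that is still finite and positive on such a set (enlarging the underlying algebra if necessary, which is harmless). Starting from $\Gamma_0\coloneqq\Lambda$ and applying the lemma repeatedly with parameter $n=8$, I obtain approximate subgroups $\Gamma_0,\Gamma_1,\Gamma_2,\dots$, each commensurable with $\Lambda$, with $\Gamma_{k+1}\subseteq\Gamma_k^4$ and $\Gamma_{k+1}^8\subseteq\Gamma_k^4$. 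Putting $X_k\coloneqq\Gamma_k^4$ then yields a descending chain $\Lambda^4=X_0\supseteq X_1\supseteq X_2\supseteq\cdots$ of approximate subgroups, each commensurable with $\Lambda$, each symmetric with $1\in X_k$, and --- the key gain --- $X_{k+1}^2=\Gamma_{k+1}^8\subseteq\Gamma_k^4=X_k$. By the model-theoretic remark in \cref{l:original basic massicot wagner}, this chain can be taken definable in $(G,\cdot,\Lambda)$ over a countable parameter set.

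Next I would pass to a sufficiently saturated elementary extension $(G^*,\cdot,\Lambda^*)\succeq(G,\cdot,\Lambda)$ and write $X_k^*\coloneqq X_k(G^*)$; the properties above are first-order and persist. Set $H\coloneqq\bigcap_{k\in\N}X_k^*$. Then $H$ is type-definable over countably many parameters, symmetric, and contains $1$; moreover $HH\subseteq H$, because if $a,b\in H$ then $a,b\in X_{k+1}^*$ for every $k$, whence $ab\in(X_{k+1}^*)^2\subseteq X_k^*$ for every $k$. Thus $H$ is a subgroup of $\langle\Lambda^*\rangle$ with $H\subseteq X_0^*=\Lambda^{*4}$.

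Finally I would run the logic-topology argument on the chain $(X_k^*)$. Let $N\trianglelefteq\langle\Lambda^*\rangle$ be the normal core of $H$, so that $N\subseteq H\subseteq\Lambda^{*4}$, and equip $L\coloneqq\langle\Lambda^*\rangle/N$ with the logic topology. Hausdorffness is forced by $\bigcap_k X_k^*=H$, and $\Lambda^{*4}N/N$ is a compact identity neighbourhood: commensurability gives, for each $k$, a finite $F_k\subseteq G^*$ with $\Lambda^{*4}\subseteq F_kX_k^*$, so the image of $\Lambda^{*4}$ is totally bounded, while it is also closed, being the projection of a type-definable set in a saturated structure, hence compact. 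Then $L$ is a locally compact group, and the composite $\Lambda^4\hookrightarrow\Lambda^{*4}\twoheadrightarrow\Lambda^{*4}N/N\subseteq L$ is a locally compact model of $\Lambda^4$ in the sense of \cref{d:lie model}; as all the data are named by the countably many parameters defining the $X_k$, it is definable in $(G,\cdot,\Lambda)$ with countably many parameters.

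I expect the last step to be the main obstacle: converting the metrically good chain $(X_k)$ into an honest locally compact \emph{group}. Two points require care: (i) the chain only produces $X_k$ that are approximately --- not exactly --- invariant under conjugation by $\Lambda^4$, so passing to a normal subgroup and checking that the logic topology is a group topology, without destroying commensurability, needs either a more careful (genuinely recursive) bookkeeping in the iteration or standard logic-topology machinery for ind-definable groups \cite{hrushovski2011stable}; and (ii) the local compactness of $L$ has to be extracted from commensurability via saturation. This is precisely where the Sanders--Croot-Sisask input encoded in \cref{l:original basic massicot wagner} --- which delivers the escape/Gleason property $X_{k+1}^2\subseteq X_k$ with $X_{k+1}$ still commensurable with $\Lambda$ --- is indispensable; the remaining bookkeeping is soft.
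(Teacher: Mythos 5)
Your overall strategy is the one the paper has in mind: iterate \cref{l:original basic massicot wagner} to produce a descending chain $X_{k+1}^2\subseteq X_k\subseteq\Lambda^4$ of approximate subgroups commensurable with $\Lambda$, then convert the chain into a locally compact model (your saturation/logic-topology argument, or simply \cite[Theorem~3.5]{machado2025closed}, which is how the paper concludes). The second half is therefore fine as a sketch. The gap is in the very first move of the iteration: the claim that \cref{l:original basic massicot wagner} ``applies to any approximate subgroup commensurable with $\Lambda$, since the content restricts\dots (enlarging the underlying algebra if necessary, which is harmless)''. The lemma can only be applied to an approximate subgroup that lies in $\mathcal{A}$ with finite positive content, and the set $S=\Gamma_1$ it produces is carved out \emph{by the content itself} (it is of the form $\{g\sth \mu(gW\cap W)\text{ is large}\}$ for $W$ a finite intersection of translates of $\Lambda$), not by Boolean combinations of translates of $\Lambda$; nothing places it in $\mathcal{A}$, which is merely assumed to be some left-invariant ring containing $\Lambda$ (the model-theoretic remark does not help either, since $(G,\cdot,\Lambda)$-definable sets need not belong to $\mathcal{A}$). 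Enlarging the algebra is not harmless: you would need a \emph{left-invariant} finitely additive extension of $\mu$ to an invariant ring containing $\Gamma_1$ (and then $\Gamma_2,\Gamma_3,\dots$), and invariant extension of contents is a genuine obstruction --- for a non-amenable $G$ the invariant content on a small invariant subalgebra need not extend invariantly at all. The entire point of the Massicot--Wagner hypothesis is that $\mu$ is given only on $\mathcal{A}$, so an argument that re-measures the newly constructed approximate subgroups is assuming away the main difficulty.

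The actual recursion (Massicot--Wagner's, and its abstraction in this note) is designed precisely to avoid this: at every stage the only sets fed to the content are $\Lambda$ and finite intersections of its translates, which stay in $\mathcal{A}$ automatically; what changes from stage to stage is the set \emph{relative to which} the covering/thickness requirement is imposed --- this is the role of $\Gamma$ in \cref{l:basic massicot wagner}, and of replacing $\Gamma$ by $\langle D_i\rangle$ while keeping $A$ and $B$ fixed in the proof of \cref{t:recursive massicot wagner} (the new sets $D_i$ are never required to be measurable). So to repair your proof you cannot invoke \cref{l:original basic massicot wagner} as a black box at stage $k\geq 1$; you must re-enter its proof with the largeness notion taken relative to $\Gamma_k$ (equivalently, use a thick Massicot--Wagner system as in this paper). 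Once the chain $D_{i+1}^2\subseteq D_i$ is obtained that way, your concluding step --- or a direct appeal to \cite[Theorem~3.5]{machado2025closed}, noting $\langle\Lambda\rangle\subseteq\Comm(\Lambda^4)$ --- goes through, including the definability over countably many parameters.
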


The proof of \cref{l:original basic massicot wagner} is purely combinatorial and dates back to Sanders \cite{sanders2010nonabelian} and Croot-Sisask \cite{croot2010probabilistic}. On the other hand, the proof of \cref{t:original recursive massicot wagner} relies on applying \cref{l:original basic massicot wagner} recursively to find a sequence of (definable) approximate subgroups $(D_i)_{i\in\N}$ contained in and commensurable with $\Lambda^4$ such that $D_{i+1}^2\subseteq D_i$ for all $i\in\N$. The proof of \cref{t:original recursive massicot wagner} then immediately concludes, as the existence of such a sequence is equivalent to the existence of a locally compact model. An elementary proof of this equivalence is provided in \cite[Theorem~3.5]{machado2025closed}. From the point of view of model theory, this equivalence is straightforward, as such a sequence is equivalent to the existence of a type-definable normal subgroup $N$ of $\langle \Lambda\rangle$ contained in $\Lambda^4$ with bounded index, and so equivalent to the existence of a locally compact model (obtained by taking the quotient by $N$ and using the logic topology).

\

In \cite{hrushovski2022amenability}, the Massicot-Wagner method was revisited. The authors enhanced the basic Massicot-Wagner argument in three key ways.
\begin{enumerate}[label={\rm (\roman*)}, wide]
\item First, they adapt the technique for use in ``asymmetric'' situations, i.e. group actions. 
\item Second, they note that the family of measurable sets only needs to be a bounded lattice of sets, i.e. it suffices to work with a finite mean space\footnote{See \cref{d:mean}.}.
\item Last, from a model-theoretic perspective, they avoid saturation and achieve semipositively definability\footnote{Definable using $\wedge$, $\vee$, $\exists$ and $\forall$, but no $\neg$.}. 
\end{enumerate}
\begin{lem}[Basic Massicot-Wagner {\cite[Proposition~2.11]{hrushovski2022amenability}}] \label{l:hkp basic massicot wagner} Let $G\curvearrowright E$ be a group acting on a set. Let $(G,\mathcal{A},\mu)$ be a finite mean space invariant by left translations, and let $(E,\mathcal{F},m)$ be a finite mean space invariant by the action of $G$. Let $A\in\mathcal{A}$ and $B\in\mathcal{F}$ satisfy $\mu(A)>0$ and $m(B)>0$. Assume that $A'B\in\mathcal{F}$ for any $A'$ which is a finite intersection of translates of $A$. Then, there is a symmetric subset $S\subseteq G$ such that finitely many (left) translates of $S$ cover $G$ and $S^n\subseteq S(AB)\coloneqq\{g\in G\sth gAB\cap AB\neq \emptyset\}$. 

\emph{Model-theoretic remark:} the set $S$ can be chosen to be semipositively definable in the language $(G,\cdot,G\setminus\Lambda,A)$.
\end{lem}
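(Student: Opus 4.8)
The plan is to run the Croot–Sisask / Sanders argument in the asymmetric setting, using the mean on $E$ to measure ``almost-stabilisers'' and the mean on $G$ to control their sizes. Write $f=\mathbf 1_{AB}\in L^\infty(E,\mathcal F,m)$ — note $AB\in\mathcal F$ since $AB=A''B$ with $A''=A$ a (trivial) finite intersection of translates — and for a threshold $\varepsilon>0$ set
\[
S_\varepsilon \;\coloneqq\; \{\,g\in G \sth m(gAB\,\triangle\, AB) < \varepsilon\, m(B)\,\}.
\]
Each $S_\varepsilon$ is symmetric (because $m$ is $G$-invariant, $m(g^{-1}X\triangle X)=m(X\triangle gX)$) and contains $e$, and the triangle inequality $m(ghAB\triangle AB)\le m(ghAB\triangle gAB)+m(gAB\triangle AB)=m(hAB\triangle AB)+m(gAB\triangle AB)$ gives $S_{\varepsilon}S_{\delta}\subseteq S_{\varepsilon+\delta}$; in particular $S_{\varepsilon/n}^n\subseteq S_\varepsilon$. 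Since $gAB\triangle AB=\emptyset\Rightarrow g\in S(AB)$, and more crudely $m(gAB\triangle AB)<m(AB)\le m(B)\cdot(\text{const})$ forces $gAB\cap AB\neq\emptyset$, one gets $S_\varepsilon\subseteq S(AB)$ for $\varepsilon$ below a fixed bound depending only on the doubling-type constant relating $m(AB)$ and $m(B)$. So the whole problem reduces to: find one $\varepsilon>0$ (below that bound) for which $S_{\varepsilon/n}$ is \emph{large}, i.e. finitely many left translates of it cover $G$.

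The key step — and the heart of Croot–Sisask — is a pigeonhole/averaging estimate showing $S_{\varepsilon/n}$ is large. The idea: consider, for $a\in A$, the translated functions $\tau_a f = \mathbf 1_{aAB}$ (using $aB\subseteq AB$ when $a\in A$... more precisely use $f_a\coloneqq \mathbf 1_{aB}$ viewed inside $AB$). Average over $a$ drawn according to $\mu$ restricted to $A$: the map $a\mapsto f_a$ takes values in a bounded subset of $L^1(E,m)$, and a second-moment / Croot–Sisask sampling argument produces, for each $k$, a set $T_k\subseteq A$ of $\mu$-measure $\ge \mu(A)/C_k$ such that all $a,a'\in T_k$ have $\|f_a-f_{a'}\|_{L^1(m)}<(\varepsilon/n)\,m(B)$; here $C_k$ depends polynomially on $n/\varepsilon$ and on the ratio $\mu(A)/\mu(T_k)$ one is willing to tolerate. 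Then for $a,a'\in T_k$, $a'a^{-1}$ maps $aB$ to $a'B$ up to small $m$-error, giving $a'a^{-1}\in S_{\varepsilon/n}$ after translating; hence $T_kT_k^{-1}\subseteq S_{\varepsilon/n}$ (modulo keeping track of how $aB$ sits inside $AB$ and that $A'B\in\mathcal F$ for $A'$ a finite intersection of translates of $A$, which is exactly the hypothesis that makes all the relevant sets lie in $\mathcal F$ so the mean $m$ can be applied). Since $\mu(T_k)\ge\mu(A)/C_k>0$ and $\mu$ is left-invariant with $\mu(G)<\infty$, finitely many left translates of $T_k$ cover $G$ (translates of a positive-measure set in a finite mean space), hence finitely many left translates of $T_kT_k^{-1}\subseteq S_{\varepsilon/n}$ cover $G$ as well — wait, one needs translates of $S$ itself; but $gT_k\subseteq gaS_{\varepsilon/n}$ for any fixed $a\in T_k$, so covering $G$ by translates of $T_k$ yields covering by translates of $S_{\varepsilon/n}$. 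Finally note $S_{\varepsilon/n}^n\subseteq S_\varepsilon\subseteq S(AB)$ and set $S\coloneqq S_{\varepsilon/n}$.

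The main obstacle I expect is the Croot–Sisask sampling estimate in the \emph{finitely additive / bounded-lattice} setting: the classical argument uses an $L^p$ concentration-of-measure inequality for sums of i.i.d. random variables, which needs genuine probability measures, whereas here $\mu,m$ are only finitely additive means defined on lattices of sets, so one cannot literally sample. The fix, following \cite{hrushovski2022amenability}, is to replace actual random sampling by a deterministic convexity/averaging argument: one works with the finite mean directly, using that $\mu$ extends to a positive linear functional on the space of functions generated by indicators of lattice elements, applies Jensen/Cauchy–Schwarz there, and performs a pigeonhole on a suitable finite partition of $A$ into translate-cells. Care is needed that every set to which one applies $m$ (the various $aB$, their symmetric differences, and $A'B$ for $A'$ a finite intersection of translates of $A$) actually belongs to $\mathcal F$ — this is precisely what the hypothesis $A'B\in\mathcal F$ guarantees — and that the bound ``finitely many translates cover $G$'' is stated for left translates, matching the left-invariance of $\mu$. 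The model-theoretic remark then follows by inspecting the construction: $S_\varepsilon$ is defined from $AB$ by a formula using $\exists$ (to witness $gAB\cap AB\neq\emptyset$ after unwinding the almost-stabiliser through finite approximations) and $\wedge,\vee,\forall$ over the lattice operations, with parameters in $A$ and in $G\setminus\Lambda$, and no negation is required once one passes through the $S(AB)$-style formulation.
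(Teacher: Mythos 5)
Your route (Croot--Sisask $L^1$-almost-periodicity of $\mathbf 1_{AB}$, then a sampling argument to make the almost-stabiliser large) is genuinely different from the one this paper and \cite{hrushovski2022amenability} use, but as written it has concrete gaps. First, $\mathcal F$ is only a lattice (closed under finite unions and intersections, \cref{d:mean}), not a ring, so $gAB\triangle AB$ need not lie in $\mathcal F$ and the quantity $m(gAB\triangle AB)$ defining your $S_\varepsilon$ is not even well formed; avoiding symmetric differences is exactly the point of the mean-space refinement, which is why the paper only ever bounds intersections, via $m(gWB\cap WB)\geq m((gW\cap W)B)$ and \cref{f:lemma basic massicot wagner}. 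Second, and more seriously, the heart of your argument --- the Croot--Sisask sampling estimate producing $T_k$ --- is not proved: you acknowledge that the classical probabilistic proof needs genuine (countably additive) probability measures and then gesture at a ``deterministic convexity/averaging'' fix attributed to \cite{hrushovski2022amenability}, but that is not what they (or this paper) do. The actual mechanism is a pigeonhole on scales: with $\mathcal A$ the finite intersections of translates of $A$ that are still ``large'', set $f(k)=\inf\{m(A'B)\sth A'\in\mathcal A\cap\ell_k\}$; since $m(B)\leq f(k)\leq m(AB)<\infty$, some $k$ satisfies $f(k)<\sqrt{1+1/n}\,f(k-1)$, and for a near-optimal $W$ at that scale the set $D=\{g\sth \mu(gW\cap W)>0\}\cup\{1_G\}$ already does the job (see the proof of \cref{l:basic massicot wagner}). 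No sampling, concentration, or partition of $A$ into cells is needed, and without a fully worked-out finitely additive substitute your proof is incomplete at its central step.

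Third, your largeness conclusion rests on the claim that $\mu(T_k)>0$ with $\mu(G)<\infty$ forces finitely many left translates of $T_k$ to cover $G$. That is false for finitely additive invariant means: a set can have full mean (e.g.\ upper Banach density $1$ in $\Z$) while its complement contains arbitrarily long intervals, so no finite union of translates covers the group. What positivity of $\mu(T_k)$ actually yields is that $\{g\sth \mu(gT_k\cap T_k)>0\}\subseteq T_kT_k^{-1}$ is \emph{thick} in $G$ (\cref{l:massicot wagner systems from means}), and thickness, not positivity of the original set, is what gives the finite covering via \cref{r:thick and commensurable}. Your final patch ($gT_k\subseteq gaS_{\varepsilon/n}$) inherits the false premise, so the covering step needs to be rerouted through the thickness of the difference set in any repaired version.
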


By contrast, the recursive Massicot-Wagner argument is not  substantially revisited in \cite{hrushovski2022amenability} --- it simply appears as a claim inside of the proof of \cite[Proposition~2.52]{hrushovski2022amenability}. Notably, the authors still consider the recursive Massicot-Wagner argument only in the ``symmetric'' case, i.e. when a group acts on itself. It is therefore natural to wonder whether an asymmetric version of the recursive Massicot-Wagner method can be proven in the same way as \cite[Proposition~2.52]{hrushovski2022amenability}. In this short note, we answer this question in the affirmative. Our main result is the following:

\begin{theo}[Recursive Massicot-Wagner] \label{t:recursive massicot wagner mean} Let $G\curvearrowright E$ be a group acting on a set and let $\Lambda\subseteq G$ be an approximate subgroup. Let $(G,\mathcal{A},\mu)$ be a mean space invariant by left translations and $(E,\mathcal{F},m)$ be a mean space invariant by the action of $G$. Let $A,C\in\mathcal{A}$ with $\Lambda A\subseteq C$ and $B\in\mathcal{F}$ satisfy $0<\mu(A)\leq\mu(C)<\infty$ and $0<m(B)\leq\mu(AB)<\infty$. Assume that $A'B\in\mathcal{F}$ for any $A'$ which is a finite intersection of translates of $A$. Suppose that $S(AB)\coloneqq\{g\in\langle\Lambda\rangle \sth m(gAB\cap AB)>0\}\subseteq \Lambda^n$. Then, $\Lambda^n$ has a locally compact model.

\emph{Model-theoretic remark:} the locally compact model of $\Lambda^n$ can be taken to be definable in the language $(G,\cdot,\Lambda,A)$ with countably many parameters.  
\end{theo}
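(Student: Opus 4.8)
The plan is to reduce \cref{t:recursive massicot wagner mean} to the construction of the kind of descending chain of commensurable approximate subgroups whose existence is equivalent to that of a locally compact model, and then to build this chain by iterating \cref{l:hkp basic massicot wagner}, carrying the hypotheses of that lemma along the recursion. Concretely, by the equivalence recalled in the introduction (see \cite[Theorem~3.5]{machado2025closed}; model-theoretically, this amounts to the existence of a type-definable, bounded-index subgroup of $\langle\Lambda\rangle$ contained in $\Lambda^n$), it suffices to produce a sequence $(D_i)_{i\in\N}$ of approximate subgroups of $H:=\langle\Lambda\rangle$, each contained in $\Lambda^n$ and commensurable with $\Lambda$, such that $D_{i+1}^2\subseteq D_i$ for all $i$. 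We may assume $n\ge 2$, and we note that any such chain is automatically decreasing (as $D_{i+1}\subseteq D_{i+1}^2\subseteq D_i$, using $e\in D_{i+1}$), so it is enough to arrange $D_0\subseteq\Lambda^n$, that each $D_i$ is covered by finitely many left translates of itself in $H$, and that $D_{i+1}^n\subseteq D_i$.

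For the base case, I would apply \cref{l:hkp basic massicot wagner} to $G\curvearrowright E$ with the given sets $A$ and $B$ (its positivity and lattice hypotheses are among the assumptions of \cref{t:recursive massicot wagner mean}), obtaining a symmetric $S\subseteq G$ covered by finitely many left translates with $S^n\subseteq S(AB)$. One should observe here that the underlying Sanders/Croot--Sisask construction in fact produces $S$ with $m(sAB\triangle AB)$ as small as desired for $s\in S$, so that after shrinking the parameter $m(gAB\triangle AB)<m(AB)$ for all $g\in S^n$, whence $S^n\subseteq\{g\in H: m(gAB\cap AB)>0\}=S(AB)$ in the sense of \cref{t:recursive massicot wagner mean}; this sharper form is also what the recursion will need (and the construction may be carried out inside $H$). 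Since $e\in S$ and $S^n\subseteq S(AB)\subseteq\Lambda^n\subseteq H$, we get $S\subseteq\Lambda^n$, and since finitely many translates of $S$ cover $H$, $S$ is an approximate subgroup commensurable with $\Lambda$. Set $D_0:=S$ and record the witnessing data $A_0:=A$, $B_0:=B$, $C_0:=C$; thus $S(A_0B_0)\subseteq\Lambda^n$ and $D_0\subseteq\Lambda^n$, as required.

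The recursive step is: given $D_i$ as above together with witnessing data consisting of a finite intersection $A_i$ of left translates of $A$, a set $B_i\in\mathcal{F}$ built from $B$ by an admissible sequence of translations and finite intersections, and a set $C_i\in\mathcal{A}$ with $D_iA_i\subseteq C_i$, all subject to $0<\mu(A_i)\le\mu(C_i)<\infty$, $0<m(B_i)\le m(A_iB_i)<\infty$, the lattice condition for $A_i$ (which is automatic, since every finite intersection of translates of $A_i$ is a finite intersection of translates of $A$), and $S(A_iB_i)\subseteq D_i$, apply \cref{l:hkp basic massicot wagner} to $G\curvearrowright E$ with the pair $(A_i,B_i)$. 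This yields a symmetric $D_{i+1}\subseteq G$, covered by finitely many left translates, with $D_{i+1}^n\subseteq S(A_iB_i)\subseteq D_i$; exactly as in the base case, this forces $D_{i+1}\subseteq\Lambda^n$ and makes $D_{i+1}$ an approximate subgroup commensurable with $\Lambda$, and $D_{i+1}^2\subseteq D_i$. The remaining task — and the only delicate one — is to manufacture the witnessing data $(A_{i+1},B_{i+1},C_{i+1})$ for $D_{i+1}$ satisfying the same list of properties, in particular $\mu(A_{i+1})>0$ and $S(A_{i+1}B_{i+1})\subseteq D_{i+1}$; once this is done the induction closes, and the resulting sequence $(D_i)$, via the cited equivalence, produces the locally compact model of $\Lambda^n$.

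Manufacturing the new witnessing data is the main obstacle, and it is precisely the asymmetric analogue of the bookkeeping carried out inside the proof of \cite[Proposition~2.52]{hrushovski2022amenability}. The idea is to take for $A_{i+1}$ a finite intersection of left translates of $A_i$ by elements of $D_{i+1}$ and for $B_{i+1}$ a finite intersection of $D_{i+1}$-translates of $A_iB_i$, so that $A_{i+1}B_{i+1}$ becomes ``almost $D_{i+1}$-invariant''; the sharp form of \cref{l:hkp basic massicot wagner} recorded above then forces $S(A_{i+1}B_{i+1})\subseteq D_{i+1}$ once sufficiently many $D_{i+1}$-translates have been used. Ensuring simultaneously that $\mu(A_{i+1})>0$ is exactly where the ``thickness'' hypotheses $0<\mu(A)\le\mu(C)<\infty$ and $0<m(B)\le m(AB)<\infty$ are needed: they are propagated along the recursion using $\Lambda A\subseteq C$ together with $D_{i+1}\subseteq\Lambda^n$ (for instance $D_{i+1}^kA_{i+1}\subseteq\Lambda^{kn}A\subseteq F_kC$ for a finite $F_k$ with $\Lambda^{kn}\subseteq F_k\Lambda$, which keeps the ambient $\mu$- and $m$-measures finite), and they bound how many $D_{i+1}$-translates may be intersected before the measure collapses to $0$; one then has to choose $C_{i+1}\in\mathcal{A}$ containing $D_{i+1}A_{i+1}$ of the right finite measure. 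The crux is to balance these two requirements at every stage, uniformly enough for the recursion to run forever — this is where the argument of \cite[Proposition~2.52]{hrushovski2022amenability} must be genuinely adapted to the group-action setting. For the model-theoretic refinement, the model-theoretic remark of \cref{l:hkp basic massicot wagner} makes each $D_i$ semi-positively definable over finitely many parameters in the language $(G,\cdot,\Lambda,A)$, so only countably many parameters occur in total, and the passage to $H/\bigcap_i D_i$ equipped with its logic topology is definable.
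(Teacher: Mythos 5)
There is a genuine gap, and it sits exactly where you acknowledge it: the recursive step. Your plan is to iterate the basic lemma while modifying the data at each stage, taking $A_{i+1}$ to be an intersection of $D_{i+1}$-translates of $A_i$ and $B_{i+1}$ an intersection of $D_{i+1}$-translates of $A_iB_i$, and you defer the verification that the new data satisfies the same hypotheses --- in particular $m(A_{i+1}B_{i+1})>0$, membership of the relevant sets in $\mathcal{F}$, and above all $S(A_{i+1}B_{i+1})\subseteq D_{i+1}$ --- calling it ``the crux'' that ``must be genuinely adapted''. But that deferred step is the entire content of the theorem: in general one only knows $S(AB)\subseteq A\cdot S(B)\cdot A^{-1}$, and for an arbitrary action of $G$ on $E$ there is no apparent way to modify $A$ and $B$ so as to regain control of $S(A_{i+1}B_{i+1})$; this is precisely the obstruction the present note is written to circumvent. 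Two concrete problems with your sketch: (i) the hypothesis only gives $A'B\in\mathcal{F}$ for $A'$ a finite intersection of translates of $A$, so there is no guarantee that sets such as $A'(A_iB_i\cap gA_iB_i)$ lie in $\mathcal{F}$, let alone have positive mean; (ii) the ``sharp form'' of \cref{l:hkp basic massicot wagner} you invoke --- that $m(sAB\,\triangle\, AB)$ can be made small for $s\in S$ --- is not what the Sanders--Croot--Sisask argument yields: the almost-invariance it produces is of $WB$ for some finite sub-intersection $W$ of translates of $A$ (giving $m(gWB\cap WB)>(1-n\varepsilon)m(WB)$ and hence only $m(gAB\cap AB)>0$), not of $AB$ itself, so the mechanism you propose for forcing $S(A_{i+1}B_{i+1})\subseteq D_{i+1}$ is not available.

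The paper's route is different and avoids this bookkeeping altogether: $A$ and $B$ are kept fixed throughout the recursion, and instead the basic lemma is re-proved with an extra parameter $\Gamma$ (a Massicot--Wagner system \emph{relative to} $\Gamma$), whose conclusion produces $D\subseteq\Gamma\cap AA^{-1}$ with $D^n\subseteq S_\Gamma(AB)=\{g\in\langle\Gamma\rangle:m(gAB\cap AB)>0\}$. In the recursion one takes $\Gamma=D_i$: since $D_i$ is symmetric, contains the identity and is commensurable with $\Lambda$ (hence an approximate subgroup by \cref{l:symmetric sets of the commensurator}), and since $S_{D_i}(AB)\subseteq S(AB)\subseteq\Lambda^n$, \cref{r:index intrinsic} gives $S_{D_i}(AB)\subseteq D_i^{k_i}$ for some $k_i$; applying the relativized basic lemma (\cref{l:basic massicot wagner}, with the thickness hypothesis transported via \cref{r:hereditary thick}) then yields $D_{i+1}$ with $D_{i+1}^{2k_i}\subseteq S_{D_i}(AB)\subseteq D_i^{k_i}$, and the chain $(D_i^{k_i})$ feeds into \cite[Theorem~3.5]{machado2025closed} exactly as in your reduction. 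So your overall reduction and base case are fine and match the paper, but the inductive engine you propose is the one the paper explicitly discards as unworkable for general actions, and you do not supply the missing argument; without either carrying out that construction or introducing the relative parameter $\Gamma$, the proof does not close.
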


\section{Preliminaries}
\begin{defi}[Content] \label{d:content} A \emph{content space} $(E,\mathcal{A},\mu)$ is a non-empty set $E$ with a ring $\mathcal{A}$ of subsets of $E$ and a finitely additive function $\mu\map \mathcal{A}\to[0,\infty]$ such that $\mu(\emptyset)=0$. In other words:
\begin{enumerate}[label={\rm (\roman*)}, wide]
\item $\mathcal{A}$ is a non-empty family of subsets of $E$ closed under finite unions, intersections and differences.
\item For $A,B\in\mathcal{A}$, with $A\cap B=\emptyset$, $\mu(A\cup B)=\mu(A)+\mu(B)$.
\item $\mu(\emptyset)=0$.
\end{enumerate}
We say that $(E,\mathcal{A},\mu)$ is a \emph{finite} content space if $E\in\mathcal{A}$ and $\mu(E)$ is finite.
\end{defi}
\begin{defi}[Mean]\label{d:mean} A \emph{mean space} $(E,\mathcal{A},\mu)$ is a non-empty set $E$ with a lattice $\mathcal{A}$ of subsets of $E$ containing $\emptyset$ and a monotone additive function $\mu\map \mathcal{A}\to [0,\infty]$ such that $\mu(\emptyset)=0$. In other words:
\begin{enumerate}[label={\rm (\roman*)}, wide]
\item $\mathcal{A}$ is a family of subsets of $E$ closed under finite unions and intersections with $\emptyset\in\mathcal{A}$.
\item For $A,B\in\mathcal{A}$, $\mu(A)\leq\mu(B)$ if $A\subseteq B$.
\item For $A,B\in\mathcal{A}$, $\mu(A\cup B)=\mu(A)+\mu(B)-\mu(A\cap B)$ if $\mu(A\cap B)<\infty$. 
\item $\mu(\emptyset)=\emptyset$.
\end{enumerate}
We say that $(E,\mathcal{A},\mu)$ is a \emph{finite} mean space if $E\in\mathcal{A}$ and $\mu(E)$ is finite.
\end{defi}

\begin{defi}[Approximate subgroups]\label{d:approximate} Let $G$ be a group and $A,B\subseteq G$. 
\begin{enumerate}[label={\rm{(\arabic*)}}, wide]
\item $A$ and $B$ are \emph{(left) commensurable} if finitely many left translates of $A$ cover $B$ and finitely many left translates of $B$ cover $A$. More precisely, we say that $A$ and $B$ are \emph{$k$-commensurable} if $A\subseteq \Delta B$ and $B\subseteq \Delta A$ for some $\Delta\subseteq G$ with $|\Delta|=k$.
\item $A$ is a \emph{$k$-approximate subgroup} if it is symmetric (i.e. $A^{-1}=A$) contains the identity and is $k$-commensurable with its set of pairwise products $A^2\coloneqq A\cdot A=\{ab\sth a,b\in A\}$.
\end{enumerate}  
\end{defi}
\begin{rmk} \label{r:index intrinsic} If finitely many translates of $A$ cover $B$, there is $\Delta\subseteq AB^{-1}$ such that $B\subseteq \Delta A$.
\end{rmk}

\begin{defi}[Commensurator] The \emph{commensurator} of $X$ in $G$ is the set
\[\Comm_G(X)\coloneqq \{g\in G\sth g^{-1}Xg\text{ and }X\text{ are commensurable}\}.\] 
\end{defi}

\begin{fact}[{\cite[Lemma~5.1]{hrushovski2022beyond}}] \label{f:commensurator} Let $G$ be a group and $\Lambda\subseteq G$ be an approximate subgroup. Then, $\Comm_G(\Lambda)$ is a subgroup of $G$ containing $\Lambda$ and $\Comm_G(\Lambda)=\bigcup \{X\subseteq G \sth X\text{ approximate subgroup commensurable with }\Lambda\}$. 
\end{fact}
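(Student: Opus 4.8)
The plan is to run the asymmetric Massicot--Wagner argument so as to produce a decreasing sequence $(D_i)_{i\in\N}$ of approximate subgroups of $G$, each contained in $\Lambda^n$ and commensurable with $\Lambda$, satisfying $D_{i+1}^2\subseteq D_i$ for all $i$. Once this is done, the conclusion follows from the standard equivalence between the existence of such a sequence and the existence of a locally compact model of $\Lambda^n$ (see \cite[Theorem~3.5]{machado2025closed}, or the model-theoretic discussion in the introduction); here one uses that $\Lambda^n$ is itself an approximate subgroup commensurable with $D_0$, and that $D_0\subseteq\Lambda^n\subseteq\Delta D_0$ for some finite $\Delta$, so the hypotheses of that equivalence are met. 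Throughout I would apply \cref{l:hkp basic massicot wagner} in the localised form in which the finiteness of the two mean spaces is replaced by the boundedness hypotheses $\Lambda A\subseteq C$ with $0<\mu(A)\le\mu(C)<\infty$ and $0<m(B)\le m(AB)<\infty$: these are exactly the kind of local control that the underlying Croot--Sisask--Sanders averaging requires, and moving to this form (or, alternatively, first restricting $\mathcal A$ and $\mathcal F$ to suitable invariant regions of finite measure) is routine.

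The key objects are the overlap sets. For $\delta\in(0,1)$ I set
\[
S_\delta\coloneqq\{\,g\in\langle\Lambda\rangle \sth m(gAB\cap AB)>(1-\delta)\,m(AB)\,\}.
\]
These make sense because for $g\in\langle\Lambda\rangle$ the set $gA$ is a single translate of $A$, so $gAB\in\mathcal F$ by hypothesis and hence $gAB\cap AB\in\mathcal F$. Using only that $m$ is monotone, finitely additive, and $G$-invariant (in particular $m(gAB)=m(AB)$), one checks the following elementary facts: each $S_\delta$ is symmetric and contains the identity; $S_\delta\subseteq S_{\delta'}$ whenever $\delta\le\delta'$; $S_\delta\subseteq S(AB)\subseteq\Lambda^n$, because $m(gAB\cap AB)>(1-\delta)m(AB)>0$; and, crucially, $S_\delta\cdot S_{\delta'}\subseteq S_{\delta+\delta'}$ whenever $\delta+\delta'<1$. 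The last fact is obtained by applying monotonicity and inclusion--exclusion inside the set $gAB$ (of finite measure $m(AB)$) to its two subsets $ghAB\cap gAB=g(hAB\cap AB)$, of measure $>(1-\delta')m(AB)$ by $G$-invariance, and $gAB\cap AB$, of measure $>(1-\delta)m(AB)$, giving $m(ghAB\cap AB)\ge m(ghAB\cap gAB\cap AB)>(1-\delta-\delta')m(AB)$. Putting $D_i\coloneqq S_{2^{-i-1}}$ for $i\in\N$, the sequence $(D_i)$ is decreasing, each $D_i$ is a symmetric subset of $\Lambda^n$ containing the identity, and $D_{i+1}^2=S_{2^{-i-2}}^2\subseteq S_{2^{-i-1}}=D_i$.

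The only substantive point left --- and the only place where the real Massicot--Wagner content enters --- is that each $D_i$ is an approximate subgroup commensurable with $\Lambda$. One direction is free: $D_i\subseteq\Lambda^n$ and $\Lambda^n$ is covered by finitely many left translates of $\Lambda$ since $\Lambda$ is an approximate subgroup. For the other direction I would appeal to \cref{l:hkp basic massicot wagner}, inspecting its proof: the symmetric set it returns is an ``approximate stabiliser'' of $AB$ at a scale that, by applying \cref{l:hkp basic massicot wagner} with a sufficiently large exponent, can be taken finer than any prescribed $\delta>0$; that is, for every $\delta>0$ it yields a symmetric set $S$ with $S\subseteq S_\delta$ finitely many left translates of which cover $G$. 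Taking $\delta=2^{-i-1}$ gives $S\subseteq D_i$, so finitely many left translates of $D_i$ cover $G$ and, in particular, cover $\Lambda$; together with the previous inclusion this says $D_i$ is commensurable with $\Lambda$. Since then $D_i^2\subseteq\Lambda^{2n}$ is commensurable with $\Lambda$, hence with $D_i$, finitely many translates of $D_i$ cover $D_i^2$, so $D_i$ is an approximate subgroup, as required. Feeding $(D_i)$ into the equivalence recalled above yields a locally compact model of $\Lambda^n$; and for the model-theoretic remark, tracking definability through the construction (via the definability clause of \cref{l:hkp basic massicot wagner} and the introduction's observations) shows the type-definable subgroup $\bigcap_i D_i$, hence the quotient carrying the logic topology, to be definable in $(G,\cdot,\Lambda,A)$ over countably many parameters.

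I expect the commensurability step to be the main obstacle. Everything else is bookkeeping with finitely additive set functions, and the asymmetry ($G$ acting on $E$ rather than on itself) causes no real trouble, since $g\mapsto g\cdot AB$ still makes sense for $g\in\langle\Lambda\rangle$ and inclusion--exclusion still takes place inside a single translate. But producing, for arbitrarily small $\delta$, a set commensurable with $\Lambda$ inside $S_\delta$ is exactly the non-trivial Sanders/Croot--Sisask input; the care in the proof goes into pinning down the localised form of \cref{l:hkp basic massicot wagner} that the hypotheses on $A$, $C$ and $AB$ are designed to make available, and into checking that the set it produces genuinely sits inside the overlap set $S_\delta$.
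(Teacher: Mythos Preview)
Your proposal does not address the stated Fact at all. The statement to be proved is a purely group-theoretic assertion about the commensurator of an approximate subgroup: that $\Comm_G(\Lambda)$ is a subgroup containing $\Lambda$, and that it coincides with the union of all approximate subgroups commensurable with $\Lambda$. There are no mean spaces, no group action $G\curvearrowright E$, no sets $A,B,C$, no $S(AB)$, and no locally compact model in this statement. What you have written is an attempt at \cref{t:recursive massicot wagner mean} (or \cref{t:recursive massicot wagner}), the Recursive Massicot--Wagner theorem, which is an entirely different result.

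Moreover, in the paper this Fact carries no proof: it is simply cited from \cite[Lemma~5.1]{hrushovski2022beyond}. A correct write-up would either reproduce that short combinatorial argument---showing closure of $\Comm_G(\Lambda)$ under products and inverses via transitivity of commensurability, that $\Lambda\subseteq\Comm_G(\Lambda)$ because $\Lambda$ normalises itself up to commensurability (using $\Lambda^2\subseteq\Delta\Lambda$), and the two inclusions for the union description---or else defer to the citation. None of the machinery you invoke is relevant here.
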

We will use the following lemma several times:
\begin{lem}\label{l:symmetric sets of the commensurator} Let $G$ be a group, $\Lambda\subseteq G$ an approximate subgroup and $X\subseteq G$ a symmetric subset containing the identity commensurable with $\Lambda$. Then, $X$ is an approximate subgroup if and only if $X\subseteq \Comm(\Lambda)$.
\begin{proof} By \cref{f:commensurator}, if $X$ is an approximate subgroup, then $X\subseteq \Comm(\Lambda)$. Conversely, if $X\subseteq \Comm(\Lambda)$ is symmetric and commensurable with $\Lambda$, then there is $\Delta\subseteq \Comm(\Lambda)$ finite such that $X\subseteq \Delta \Lambda$ by \cref{r:index intrinsic}. Thus, by symmetry, $X^2\subseteq \Delta \Lambda^2\Delta^{-1}$, so $X^2$ is covered by finitely many left translates of $\Lambda\Delta^{-1}$. Now, for each $g\in\Delta$, $g^{-1}\in\Comm(\Lambda)$, so $\Lambda g^{-1}$ is covered by finitely many left translates of $\Lambda$. As $\Delta$ is finite, we conclude that $\Lambda\Delta^{-1}$ is covered by finitely many translates of $\Lambda$, so $X^2$ is covered by finitely many translates of $X$.
\end{proof}
\end{lem}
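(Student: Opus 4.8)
The plan is to prove the two implications separately. The forward direction is essentially a quotation of \cref{f:commensurator}: if $X$ is an approximate subgroup commensurable with $\Lambda$, then $X$ is one of the sets appearing in the union on the right-hand side of the equality stated there, so $X \subseteq \Comm_G(\Lambda)$. All the content lies in the converse.

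For the converse, suppose $X \subseteq \Comm_G(\Lambda)$ is symmetric, contains the identity, and is commensurable with $\Lambda$. Since $1\in X$ gives $X \subseteq X^2$, to see that $X$ is an approximate subgroup it suffices to cover $X^2$ by finitely many left translates of $X$; and since $\Lambda$ is covered by finitely many left translates of $X$ (commensurability), it is in turn enough to cover $X^2$ by finitely many left translates of $\Lambda$. First I would apply \cref{r:index intrinsic} to the covering of $X$ by translates of $\Lambda$, obtaining a \emph{finite} set $\Delta \subseteq X\Lambda^{-1}$ with $X \subseteq \Delta\Lambda$; note $\Delta \subseteq \Comm_G(\Lambda)$ since $X,\Lambda \subseteq \Comm_G(\Lambda)$ and $\Comm_G(\Lambda)$ is a group. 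Using that $X$ and $\Lambda$ are symmetric,
\[
X^2 = XX^{-1} \subseteq (\Delta\Lambda)(\Delta\Lambda)^{-1} = \Delta\Lambda\Lambda^{-1}\Delta^{-1} = \Delta\Lambda^2\Delta^{-1},
\]
and $\Lambda^2$ is covered by finitely many left translates of $\Lambda$ because $\Lambda$ is an approximate subgroup; hence $X^2$ is covered by finitely many left translates of $\Lambda\Delta^{-1}$.

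The last step is to see that $\Lambda\Delta^{-1} = \bigcup_{g\in\Delta}\Lambda g^{-1}$ is covered by finitely many left translates of $\Lambda$; as $\Delta$ is finite it is enough to handle a single $g\in\Delta$. This is where membership in the commensurator is used: since $g^{-1}\in\Comm_G(\Lambda)$, the conjugate $g\Lambda g^{-1}$ is commensurable with $\Lambda$, so $g\Lambda g^{-1} \subseteq F\Lambda$ for some finite $F$, whence $\Lambda g^{-1} = g^{-1}(g\Lambda g^{-1}) \subseteq g^{-1}F\Lambda$. Chaining these inclusions, $X^2$ is covered by finitely many left translates of $\Lambda$, hence of $X$, which together with symmetry and $1\in X$ shows that $X$ is an approximate subgroup.

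I do not anticipate a real obstacle: the argument is a short chase of coverings, and the only things to watch are (i) the bookkeeping of left versus right translates, (ii) the correct use of symmetry of $\Lambda$ so that $\Lambda\Lambda^{-1}=\Lambda^2$, and (iii) the fact that $\Comm_G(\Lambda)$ is closed under inverses, which is precisely what legitimises the conjugation-by-$\Delta$ step.
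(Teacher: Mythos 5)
Your proof is correct and follows essentially the same route as the paper: apply \cref{r:index intrinsic} to get a finite $\Delta\subseteq\Comm_G(\Lambda)$ with $X\subseteq\Delta\Lambda$, bound $X^2\subseteq\Delta\Lambda^2\Delta^{-1}$ via symmetry, and use that $g^{-1}\in\Comm_G(\Lambda)$ to cover each $\Lambda g^{-1}$ by finitely many left translates of $\Lambda$. The only difference is that you spell out the conjugation step ($\Lambda g^{-1}=g^{-1}(g\Lambda g^{-1})\subseteq g^{-1}F\Lambda$) that the paper states in one line.
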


\begin{defi}[Thickness]\label{d:thick} Let $G$ be a group and $A,B\subseteq G$. We say that $A$ is \emph{(left) $k$-thick} in a set $B$ if, for any $b_1,\ldots,b_{k+1}\in B$, there are $i\neq j$ such that $b_i^{-1}b_j\in A$. 
\end{defi}
\begin{rmk} \label{r:thick and commensurable} If $A$ is $k$-thick in $B$, then $B\subseteq \Delta A$ for some $\Delta\subseteq B$ with $|\Delta|\leq k$. On the other hand, if $k$ translates of $A$ cover $B$, then $A^{-1}A$ is $k$-thick in $B$.
\end{rmk}
\begin{rmk} \label{r:hereditary thick} If $A$ is $k$-thick in $B$ and $B_0\subseteq B$, then $A\cap B_0^{-1}B_0$ is $k$-thick in $B_0$.  
\end{rmk}

\begin{defi}[Lie models]\label{d:lie model} Let $G$ be a group, $\Lambda\subseteq G$ an approximate subgroup and $H\leq G$ with $\Lambda\subseteq H$. A \emph{locally compact model} for $\Lambda$ with domain $H$ is a group homomorphism $f\map H\to L$ to a locally compact group $L$ such that 
\begin{enumerate}[label={\rm{(\roman*)}}, wide]
\item $f(\Lambda)$ is contained in a compact set;
\item there is a neighbourhood of the identity $U$ such that $f^{-1}(U)\subseteq \Lambda$.
\end{enumerate}
We say that $f$ is a \emph{Lie model} if $L$ is a Lie group. 
\end{defi}

\begin{fact}[{\cite[Theorem~3.5]{machado2025closed}}] Let $G$ be a group, $\Lambda$ an approximate subgroup of $G$ and $H\leq G$ with $\Lambda\subseteq H$. Then, $\Lambda$ has a locally compact model with domain $H$ if and only if $H\subseteq \Comm(\Lambda)$ and there is a decreasing sequence $(D_i)_{i\in\N}$ of approximate subgroups contained in and commensurable with $\Lambda$ such that $D_{i+1}^2\subseteq D_i$.
\end{fact}

\section{Massicot-Wagner systems}
Let $G$ be a group and $\Lambda,\Gamma\subseteq G$ subsets. A \emph{Massicot-Wagner system} in $\Lambda$ relative to $\Gamma$ is a sequence $\ell=(\ell_k)_{k\in\N}$ of families of non-empty subsets of $G$ with the following property:
\begin{enumerate}[wide]
\item[$\boldsymbol\ast_{\mathrm{g}}$] If $W\in \ell_k$, then $\Lambda$ is covered by finitely many translates of $\{g\in \Gamma\sth gW\cap W\in \ell_{k-1}\text{ or }g^{-1}W\cap W\in\ell_{k-1}\}$.  
\end{enumerate}

A \emph{thick Massicot-Wagner system} in $\Lambda$ relative to $\Gamma$ is a sequence $(\ell_k)_{k\in\N}$ of families of non-empty subsets of $G$ with the following property:
\begin{enumerate}[wide]
\item[$\boldsymbol\ast_{\mathrm{t}}$] If $W\in \ell_k$, then $\{g\in \Gamma\sth gW\cap W\in \ell_{k-1}\text{ or }g^{-1}W\cap W\in \ell_{k-1}\}$ is thick in $\Lambda$.  
\end{enumerate}
\begin{rmk} By \cref{r:thick and commensurable}, every thick Massicot-Wagner system is a Massicot-Wagner system.\end{rmk}
\begin{rmk} By \cref{r:hereditary thick}, if $\ell$ is a thick Massicot-Wagner system in $\Lambda$ relative to $\Gamma$, then $\ell$ is a thick Massicot-Wagner system in $\Lambda_0$ relative to $\Gamma\cap \Lambda_0^{-1}\Lambda_0$ for any $\Lambda_0\subseteq \Lambda$.
\end{rmk} 
\begin{rmk} A key innovation in our approach to the Massicot-Wagner method is the introduction of $\Gamma$. This seemingly minor technical modification is conceptually crucial for proving \cref{t:recursive massicot wagner mean}. In the usual applications of the Massicot-Wagner method to date, $\Gamma=G$ or $\Gamma=\langle\Lambda\rangle$.\end{rmk}

Massicot-Wagner systems aim to capture different levels of ``largeness''. Two basic examples were explicitly introduced in \cite{hrushovski2022amenability} and have been implicitly used in most of the applications of the Massicot-Wagner method to date.

\begin{defi} Let $G$ be a group and $\Lambda,\Gamma\subseteq G$. 
\begin{enumerate}[label={\rm(\arabic*)}, wide]
\item The \emph{largest Massicot-Wagner system} $\ell^{\mathrm{g}}\coloneqq\ell^{\mathrm{g}}(\Lambda;\Gamma)$ in $\Lambda$ relative to $\Gamma$ is recursively defined by
\[\begin{array}{ccl} W\in \ell^{\mathrm{g}}_0&\Leftrightarrow &W\neq \emptyset;\\ W\in \ell^{\mathrm{g}}_k&\Leftrightarrow &\Lambda\text{ is coverd by finitely many translates of }\\ && \{g\in \Gamma\sth gW\cap W\in \ell^{\mathrm{g}}_{k-1}\text{ or }g^{-1}W\cap W\in \ell^{\mathrm{g}}_{k-1}\}.\end{array}\]
\item The \emph{largest thick Massicot-Wagner system} $\ell^{\mathrm{t}}\coloneqq\ell^{\mathrm{t}}(\Lambda;\Gamma)$ in $\Lambda$ relative to $\Gamma$ is recursively defined by
\[\begin{array}{ccl} W\in \ell^{\mathrm{t}}_0&\Leftrightarrow &W\neq \emptyset;\\ W\in \ell^{\mathrm{t}}_k&\Leftrightarrow &\{g\in \Gamma\sth gW\cap W\in \ell^{\mathrm{t}}_{k-1}\text{ or }gW\cap W\in \ell^{\mathrm{t}}_{k-1}\}\text{ is thick in }\Lambda .\end{array}\]
\end{enumerate}
\end{defi} 

One of the main advantages of the Massicot-Wagner systems $\ell^{\mathrm{g}}$ and $\ell^{\mathrm{t}}$ is that they are canonical, i.e. invariant by group automorphisms leaving invariant $\Lambda$ and $\Gamma$. Additionally, from the point of view of model theory, they have the advantage of being $\bigvee$-definable with respect to $(G,\cdot,\Gamma,\Lambda)$ (assuming enough saturation). We leave the discussion about definability to the end of the paper. 

Another typical Massicot-Wagner system comes from mean spaces. Let $G$ be a group and $(G,\mathcal{A},\mu)$ a mean space invariant by left translations. The \emph{Massicot-Wagner system $\ell^\mu$ associated to $\mu$} is defined by 
\[\ell^\mu_k=\{W\in\mathcal{A}\sth \mu(W)>0\}\text{ for all }k\in\N.\] 

\begin{lem} \label{l:massicot wagner systems from means} Let $G$ be a group, $\Lambda\subseteq G$ and $(G,\mathcal{A},\mu)$ a mean space invariant by left translations. Let $A,C\in\mathcal{A}$ with $\Lambda A\subseteq C$ satisfy $0<\mu(A)\leq\mu(C)<\infty$. Then, for all $W\subseteq A$ with $\mu(W)>0$, the set $S_\mu\coloneqq\{g\in \Lambda^{-1}\Lambda\sth \mu(gW\cap W)>0\}$ is $\lfloor\mu(B)/\mu(W)\rfloor$-thick in $\Lambda$. In particular, $\ell^\mu$ restricted to subsets of $A$ is a thick Massicot-Wagner system.  
\end{lem}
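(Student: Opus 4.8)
The plan is to run the classical Croot--Sisask--Sanders translation argument, taking care that $\mathcal{A}$ is only a lattice. Put $N\coloneqq\lfloor\mu(C)/\mu(W)\rfloor$ for the intended thickness bound (I read the ``$\mu(B)$'' of the statement as $\mu(C)$ --- the only set in sight with finite measure containing all the relevant translates; note $N\geq 1$ since $\mu(W)\leq\mu(A)\leq\mu(C)$). Fix $W\subseteq A$ with $\mu(W)>0$, and suppose for contradiction that $S_\mu$ is not $N$-thick in $\Lambda$: there are $b_1,\dots,b_{N+1}\in\Lambda$ with $b_i^{-1}b_j\notin S_\mu$ for all $i\neq j$. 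As $b_i^{-1}b_j\in\Lambda^{-1}\Lambda$ automatically, this says $\mu(b_i^{-1}b_jW\cap W)=0$; translating on the left by $b_i$ and using left-invariance of $\mu$, equivalently $\mu(b_iW\cap b_jW)=0$ for all $i\neq j$.

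The engine is the containment $b_iW\subseteq\Lambda A\subseteq C$, which holds because $b_i\in\Lambda$ and $W\subseteq A$. Hence $\bigcup_{i=1}^{N+1}b_iW\subseteq C$, so this union and all the pairwise intersections $b_iW\cap b_jW$ have measure at most $\mu(C)<\infty$ --- which is exactly what permits the use of axiom (iii) of a mean space. I would then prove by induction on $m$ that $\mu\bigl(\bigcup_{i=1}^m b_iW\bigr)=m\,\mu(W)$: in the inductive step one has $\bigl(\bigcup_{i\leq m}b_iW\bigr)\cap b_{m+1}W=\bigcup_{i\leq m}(b_iW\cap b_{m+1}W)\in\mathcal{A}$ of measure $0$ (finite subadditivity, itself a consequence of (iii) and monotonicity), so (iii) yields $\mu\bigl(\bigcup_{i\leq m+1}b_iW\bigr)=\mu\bigl(\bigcup_{i\leq m}b_iW\bigr)+\mu(W)$. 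Taking $m=N+1$ gives $(N+1)\mu(W)\leq\mu(C)$, hence $N+1\leq\mu(C)/\mu(W)$; as $N+1$ is an integer this forces $N+1\leq\lfloor\mu(C)/\mu(W)\rfloor=N$, a contradiction. Therefore $S_\mu$ is $N$-thick in $\Lambda$.

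For the ``in particular'' clause, recall that every level $\ell^\mu_k$ is the same family $\{V\in\mathcal{A}\sth\mu(V)>0\}$. Given $W\in\mathcal{A}$ with $W\subseteq A$ and $\mu(W)>0$ and any $g\in G$: invariance of $\mathcal{A}$ under left translation gives $gW\cap W\in\mathcal{A}$; plainly $gW\cap W\subseteq W\subseteq A$; and left-invariance of $\mu$ gives $\mu(gW\cap W)=\mu(g^{-1}W\cap W)$. Consequently $\{g\in\Gamma\sth gW\cap W\in\ell^\mu_{k-1}\text{ or }g^{-1}W\cap W\in\ell^\mu_{k-1}\}=\{g\in\Gamma\sth\mu(gW\cap W)>0\}$, which contains $S_\mu$ whenever $\Lambda^{-1}\Lambda\subseteq\Gamma$. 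Since any superset of a thick set is thick, the first part of the lemma shows this set is thick in $\Lambda$; hence $\ell^\mu$ restricted to subsets of $A$ is a thick Massicot--Wagner system in $\Lambda$ relative to any $\Gamma\supseteq\Lambda^{-1}\Lambda$ (for instance $\Gamma=\langle\Lambda\rangle$).

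The only genuine subtlety --- and the step I would be most careful about --- is precisely that $\mathcal{A}$ is merely a lattice, so one cannot take complements or run ordinary inclusion--exclusion; the computation of $\mu\bigl(\bigcup b_iW\bigr)$ is arranged to use only monotonicity, finite subadditivity for null sets, and the two-set modular law (iii), all available in a mean space. Everything else is routine bookkeeping.
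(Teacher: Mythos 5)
Your proof is correct and takes essentially the same route as the paper's: a pigeonhole count of pairwise almost-disjoint translates of $W$ inside $C$, your induction merely spelling out the identity $\mu(\Delta W)=|\Delta|\cdot\mu(W)$ that the paper asserts directly for an $S_\mu$-separated set $\Delta\subseteq\Lambda$. Your reading of the statement's stray ``$\mu(B)$'' as $\mu(C)$ is exactly the bound the paper's proof implicitly uses, and your treatment of the ``in particular'' clause (via $S_\mu\subseteq\{g\in\Gamma\sth\mu(gW\cap W)>0\}$ for $\Gamma\supseteq\Lambda^{-1}\Lambda$) matches the intended reading.
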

This is a well known fact. We provide the proof for the sake of completeness.
\begin{proof} Pick $\Delta\subseteq \Lambda$ finite such that $g^{-1}h\notin S_\mu$ for $g,h\in\Delta$ with $g\neq h$. Then, $\mu(gW\cap hW)=0$ for any $g,h\in \Delta$ with $g\neq h$. Consequently, 
\[\mu(B)\geq \mu(\Delta W)=|\Delta|\cdot \mu(W),\]
so $|\Delta|\leq \mu(B)/\mu(W)$ is finite. Thus, $S_\mu$ is $\lfloor\mu(B)/\mu(W)\rfloor$-thick in $\Lambda$.
\end{proof}
\begin{rmk} In fact, $\{g\in\Lambda^{-1}\Lambda\sth \mu(gW\cap W)\geq \sfrac{\mu(W)^2}{2\mu(B)}\}$ is $\lceil 2\mu(B)/\mu(W)\rceil$-thick in $\Lambda$. This remark can be used to achieve greater quantitative control in applications, for instance by using more quantitative versions of $\ell^\mu$. 
\end{rmk}
In our treatment of the Massicot-Wagner method, we have decided to relax the hypothesis on the $\ell$ systems as much as possible (for example, we are not assuming that they are invariant by translations). We have taken this approach based on our belief that the ability to choose $\ell$ is one of the primary strengths of the method.


\section{Massicot-Wagner method} 
We start by recalling the following fact; see \cite[Lemma~2.10]{hrushovski2022amenability}.
\begin{fact}\label{f:lemma basic massicot wagner} Let $G\curvearrowright E$ be a group acting on a set and let $(E,\mathcal{F},m)$ be a mean space invariant by the action of $G$. Let $Z\in \mathcal{F}$, $g_1,g_2\in G$ and $\varepsilon_1,\varepsilon_2\in \mathbb{R}_{>0}$. Suppose $m(g_1Z\cap Z)>(1-\varepsilon_1)\cdot m(Z)$ and $m(g_2Z\cap Z)>(1-\varepsilon_2)\cdot m(Z)$ with $m(Z)<\infty$. Then, $m(g_1g_1Z\cap Z)>(1-\varepsilon_1-\varepsilon_2)\cdot m(Z)$. 
\end{fact}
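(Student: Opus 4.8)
\textbf{Proof plan for \cref{f:lemma basic massicot wagner}.}

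The plan is to estimate the measure of the complement of $g_1g_2Z\cap Z$ inside $Z$ by a union bound. Write $Z\setminus(g_1g_2Z) = Z\setminus(g_1g_2Z\cap Z)$ and bound its measure by splitting through the intermediate set $g_1Z$. Concretely, a point of $Z$ fails to lie in $g_1g_2Z$ only if it fails to lie in $g_1Z$, or it lies in $g_1Z$ but not in $g_1g_2Z$; the latter, after applying $g_1^{-1}$, is controlled by $m(Z\setminus g_2Z)$ using the $G$-invariance of $m$.

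\emph{Step 1: translate the hypotheses into complement estimates.} From $m(g_iZ\cap Z)>(1-\varepsilon_i)m(Z)$ and $m(Z)<\infty$, the additivity axiom (\cref{d:mean}(iii)) applied to the pair $g_iZ\cap Z\subseteq Z$ gives that the ``defect'' $d_i\coloneqq m(Z)-m(g_iZ\cap Z)$ satisfies $d_i<\varepsilon_i m(Z)$. I must be slightly careful here, since in a mean space $Z\setminus(g_iZ\cap Z)$ need not lie in $\mathcal{F}$; so rather than work with set differences directly I will phrase everything through values $m(Z)-m(\,\cdot\,)$ of sets that do lie in $\mathcal{F}$, namely $Z$, $g_iZ$, $g_iZ\cap Z$, $g_1Z\cap g_1g_2Z$, $g_1g_2Z\cap Z$, and $g_1Z\cap g_1g_2Z\cap Z$. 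Monotonicity and the inclusion--exclusion identity of \cref{d:mean} are the only tools needed.

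\emph{Step 2: the subadditivity step.} The key inequality is
\[
m(Z)-m(g_1g_2Z\cap Z)\ \le\ \bigl(m(Z)-m(g_1Z\cap Z)\bigr)+\bigl(m(g_1Z\cap Z)-m(g_1Z\cap g_1g_2Z\cap Z)\bigr),
\]
which is immediate from monotonicity ($g_1g_2Z\cap Z\supseteq g_1Z\cap g_1g_2Z\cap Z$ so the left side is at most $m(Z)-m(g_1Z\cap g_1g_2Z\cap Z)$, then telescope). The first bracket is $d_1<\varepsilon_1 m(Z)$. For the second bracket, monotonicity gives $m(g_1Z\cap g_1g_2Z\cap Z)\ge$ no help directly; instead bound $m(g_1Z\cap Z)-m(g_1Z\cap g_1g_2Z\cap Z)\le m(g_1Z)-m(g_1Z\cap g_1g_2Z)$ by the same telescoping trick, and then apply $g_1^{-1}$: by $G$-invariance of $m$ this equals $m(Z)-m(Z\cap g_2Z)=d_2<\varepsilon_2 m(Z)$. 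Combining, $m(Z)-m(g_1g_2Z\cap Z)<(\varepsilon_1+\varepsilon_2)m(Z)$, which is exactly the claim.

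\emph{Main obstacle.} The only real subtlety is that $\mathcal{F}$ is merely a lattice, not a ring, so I cannot freely take complements or differences of measurable sets; every step must be arranged so that only intersections and unions of the given sets appear, and all ``subtractions'' happen at the level of real numbers $m(X)-m(Y)$ for $Y\subseteq X$ with both in $\mathcal{F}$. Once the bookkeeping is set up this way, each inequality is a one-line consequence of monotonicity (\cref{d:mean}(ii)) together with the finiteness $m(Z)<\infty$ (needed so that all the quantities $m(Z)-m(\,\cdot\,)$ are well-defined non-negative reals and \cref{d:mean}(iii) applies). A sign check at the end confirms the strict inequality is preserved since at least one of the two input inequalities is strict.
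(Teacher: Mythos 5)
Your argument is correct and is essentially the standard proof of this fact (which the paper only cites from \cite[Lemma~2.10]{hrushovski2022amenability} rather than reproving): it is the inclusion--exclusion estimate on the three sets $Z$, $g_1Z$, $g_1g_2Z$ carried out inside $g_1Z$, combined with $G$-invariance of $m$, merely rephrased in terms of defects $m(Z)-m(\cdot)$. One wording fix: the step $m(g_1Z\cap Z)-m(g_1Z\cap g_1g_2Z\cap Z)\le m(g_1Z)-m(g_1Z\cap g_1g_2Z)$ is not just monotonicity/telescoping but a submodularity inequality, which indeed follows from \cref{d:mean}(iii) applied to $g_1Z\cap Z$ and $g_1Z\cap g_1g_2Z$ (their intersection lies in $Z$, so it has finite mean), i.e.\ exactly the tools you list, so the proof goes through.
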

We now reprove \cite[Proposition~2.11]{hrushovski2022amenability}. In doing so, we introduce an additional set $\Gamma$ to the statement which will be crucial for the recursive argument. 
\begin{lem}[Basic Massicot-Wagner]\label{l:basic massicot wagner} Let $G\curvearrowright E$ be a group acting on a set. Let $\Lambda\subseteq G$ and $\Gamma\subseteq G$ with $\Gamma$ symmetric containing the identity. Let $(E,\mathcal{F},m)$ be a mean space invariant by the action of $G$. Let $\ell=(\ell_k)_{k\in\N}$ be a Massicot-Wagner system in $\Lambda$ relative to $\Gamma$. Let $A\in \bigcap \ell_k$ and $B\in \mathcal{F}$ satisfy: 
\begin{enumerate}[label={\rm (\alph*)}, wide]
\item $A'B\in \mathcal{F}$ for any $A'\subseteq A$ which is a finite intersection of translates of $A$. 
\item $0<m(B)\leq m(AB)<\infty$. 
\end{enumerate}
Set $S_\Gamma(AB)\coloneqq\{g\in \langle \Gamma\rangle\sth m(gAB\cap AB)>0\}$. Then, for any $n\in\N$, there is a symmetric subset containing the identity $D\subseteq \Gamma\cap AA^{-1}$ such that finitely many translates of $D$ cover $\Lambda$ and $D^n\subseteq S_\Gamma(AB)$.
\end{lem}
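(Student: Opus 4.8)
The plan is to split the proof into a soft ``packaging'' step, which — via the submultiplicativity of stabilisers (\cref{f:lemma basic massicot wagner}) — reduces the statement to producing a single suitably quasi-invariant set, and a ``recursive'' step, which produces that set by iterating the sole axiom $\boldsymbol\ast_{\mathrm{g}}$ of a Massicot-Wagner system against a potential.

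\emph{Packaging.} First I would reduce to the following claim: for some $k\geq1$ there is a non-empty $W\subseteq A$ which is a finite intersection of translates of $A$, with $W\in\ell_k$, such that, writing $Z\coloneqq WB$ and $U\coloneqq\{g\in\Gamma\sth gW\cap W\in\ell_{k-1}\text{ or }g^{-1}W\cap W\in\ell_{k-1}\}$, one has $m(gZ\cap Z)>\bigl(1-\tfrac1{2n}\bigr)m(Z)$ for every $g\in U$. Note that by hypothesis (a) we have $Z\in\mathcal F$, and by hypothesis (b) (together with $\emptyset\neq W\subseteq A$, whence $wB\subseteq Z$ and $m(wB)=m(B)$ for $w\in W$) that $0<m(B)\leq m(Z)\leq m(AB)<\infty$. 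Granting the claim, set
\[
D\coloneqq\{g\in\Gamma\cap AA^{-1}\sth m(gZ\cap Z)>(1-\tfrac1{2n})m(Z)\}\cup\{e\}.
\]
Then $D\subseteq\Gamma\cap AA^{-1}$, $e\in D$, and $D$ is symmetric because $m$ is $G$-invariant, so $m(g^{-1}Z\cap Z)=m(gZ\cap Z)$. Every $g\in U$ lies in $AA^{-1}$ (as $W\subseteq A$ and one of $gW\cap W$, $g^{-1}W\cap W$ is non-empty) and satisfies $m(gZ\cap Z)>(1-\tfrac1{2n})m(Z)$, so $U\subseteq D$; since $W\in\ell_k$, $\boldsymbol\ast_{\mathrm{g}}$ gives that $\Lambda$ is covered by finitely many translates of $U$, hence of $D$. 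Finally, every element of $D$ is $\tfrac1{2n}$-stabilising for $Z$ (for $e$ because $m(Z)>0$), so applying \cref{f:lemma basic massicot wagner} repeatedly to $Z$, any product of $n$ elements of $D$ lies in $\{g\sth m(gZ\cap Z)>\tfrac12m(Z)>0\}$; as $Z\subseteq AB$ and $D\subseteq\Gamma$, this set is contained in $S_\Gamma(AB)$. Thus $D^n\subseteq S_\Gamma(AB)$, as wanted.

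\emph{Recursion.} To obtain such a $W$ I would use a potential. Let $T$ be the least integer with $m(AB)(1-\tfrac1{2n})^{T}<m(B)$ (finite, since $m(B)>0$), put $L\coloneqq T$ and $W_0\coloneqq A\in\ell_L$, and recurse: given a non-empty finite intersection of translates of $A$, $W_j\subseteq A$ with $W_j\in\ell_{L-j}$, write $Z_j\coloneqq W_jB$ (so $m(B)\leq m(Z_j)\leq m(AB)$) and $U_j\coloneqq\{g\in\Gamma\sth gW_j\cap W_j\in\ell_{L-j-1}\text{ or }g^{-1}W_j\cap W_j\in\ell_{L-j-1}\}$. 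If $m(gZ_j\cap Z_j)>(1-\tfrac1{2n})m(Z_j)$ for all $g\in U_j$, stop: $W\coloneqq W_j$ and $k\coloneqq L-j$ satisfy the packaging claim. Otherwise pick $g\in U_j$ with $m(gZ_j\cap Z_j)\leq(1-\tfrac1{2n})m(Z_j)$; then whichever of $gW_j\cap W_j$, $g^{-1}W_j\cap W_j$ lies in $\ell_{L-j-1}$ is a non-empty finite intersection of translates of $A$ inside $A$, and since it is contained in $gZ_j\cap Z_j$ or in $g^{-1}Z_j\cap Z_j$ accordingly (and $m$ is $G$-invariant) its product with $B$ has $m$-mass at most $(1-\tfrac1{2n})m(Z_j)$; call it $W_{j+1}$, so $m(Z_{j+1})\leq(1-\tfrac1{2n})m(Z_j)$. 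If the recursion never stopped we would get $m(Z_T)\leq m(AB)(1-\tfrac1{2n})^{T}<m(B)$, contradicting $m(Z_T)\geq m(B)$; hence it stops at some $j\leq T-1<L$, so $k=L-j\geq1$ as required.

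\emph{The crux.} The one genuinely non-formal point — the Sanders-Croot-Sisask heart of the method — is the inclusion $(gW_j\cap W_j)B\subseteq gW_jB\cap W_jB$, which lets a \emph{deficit of $m$-overlap in $E$} (the failure of $Z_j$ to be quasi-invariant) be traded for a \emph{genuine multiplicative loss of the potential $m(W_jB)$}, while $\boldsymbol\ast_{\mathrm{g}}$ independently keeps the shrinking sets $\ell$-large in $G$ so that the finite covering of $\Lambda$ survives; the potential, trapped in $[m(B),m(AB)]$, can only lose a definite factor boundedly often. This is exactly where the hypotheses $0<m(B)\leq m(AB)<\infty$ and (a) are used, and it is the asymmetric, $\Gamma$- and $\ell$-relative incarnation of the classical argument.
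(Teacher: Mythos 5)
Your proof is correct and takes essentially the same approach as the paper: the same potential $m(WB)$ over finite intersections of translates of $A$, trapped in $[m(B),m(AB)]$, the same use of $\boldsymbol\ast_{\mathrm{g}}$ to cover $\Lambda$, and the same application of \cref{f:lemma basic massicot wagner} to pass from $D$ to $D^n\subseteq S_\Gamma(AB)$. The only difference is bookkeeping: the paper pigeonholes on $f(k)=\inf\{m(A'B)\sth A'\in\mathcal{A}\cap\ell_k\}$ and takes a near-minimiser $W$ with $m(WB)<\sqrt{1+1/n}\,f(k)$, whereas you run an explicit greedy descent through the levels that stops once the current $W_j$ is $(1-\tfrac1{2n})$-quasi-invariant --- a cosmetic variant of the same decrement argument.
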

The proof is essentially identical to \cite[Proposition~2.11]{hrushovski2022amenability}. We provide it for the sake of completeness and to remark the role played by $\Gamma$.
\begin{proof} Set $\varepsilon=\sfrac{1}{n}$ and $\lambda=\sqrt{1+\varepsilon}>1$. Let $\mathcal{A}$ be the family of subsets of $A$ that are finite intersections of translates of $A$. Let $f(k)=\inf \{m(A'B)\sth A'\in \mathcal{A}\cap\ell_k\}$. By assumption, $A\in \mathcal{A}\cap \ell_k$ for all $k$, so $f(k)$ is well defined and $0<m(B)\leq f(k)\leq m(AB)<\infty$. Consequently, we cannot have $f(k)\geq \lambda\, f(k-1)$ for all $k$. Therefore, there is $k$ such that $f(k)<\lambda\, f(k-1)$. In fact, we can take $k\leq \lfloor\log_\lambda(m(AB)/m(B))\rfloor$. Take $W\in \mathcal{A}\cap \ell_k$ satisfying $m(WB)<\lambda\, f(k)$. Note that $W\subseteq A$. 

Let $D=\{g\in \Gamma \sth gW\cap W\in \ell_{k-1}\text{ or }g^{-1}W\cap W\in\ell_{k-1}\}\cup \{1_G\}$. For any $g\in D$, $gW\cap W\neq \emptyset$, so $g\in WW^{-1}\subseteq AA^{-1}$. Hence, $D\subseteq \Gamma\cap AA^{-1}$. Since $W\in\ell_k$, finitely many translates of $D$ cover $\Lambda$. For $g\in D$ with $g\neq 1_G$, we have $gW\cap W\in\ell_{k-1}$ or $g^{-1}W\cap W\in\ell_{k-1}$, so $f(k-1)\leq m((gW\cap W)B)$ or $f(k-1)\leq m((g^{-1}W\cap W)B)$. By invariance of $m$, we have $f(k-1)\leq m((gW\cap W)B)$. Therefore,
\[\begin{aligned}
m(gWB\cap WB)&\geq m((gW\cap W)B)\geq f(k-1)\\
&>f(k)/\sqrt{1+\varepsilon}>m(WB)/(1+\varepsilon)\\
&>(1-\varepsilon)\, m(WB).\end{aligned}\]  
By \cref{f:lemma basic massicot wagner}, we get $m(gAB\cap AB)\geq m(g WB\cap WB)>(1-n\varepsilon)\, m(WB)>0$ for all $g\in D^n$. Consequently, $g\in S_\Gamma(AB)$.  
\end{proof}
\begin{rmk} Actually, the proof gives more information about $D$. We know that $D=\{g\in\Gamma\sth gW\cap W\in\ell_{k-1}\text{ or }g^{-1}W\cap W\in\ell_{k-1}\}\cup\{1_G\}$ for some $W\subseteq A$ which is a finite intersection of translates of $A$ with $W\in\ell_k$ and $k\leq \lfloor\log_\lambda(m(AB)/m(B))\rfloor$. Depending on the definition of $\ell$, this extra information may be relevant.  
\end{rmk}

To obtain a locally compact model, the basic Massicot-Wagner argument should be applied iteratively to find a decreasing sequence $(D_i)_{i\in \N}$ with $D_{i+1}^2\subseteq D_i$ for $i\in\N$. This recursive argument requires control over $S(AB)$. Generally, we only know that $S(AB)\subseteq A\cdot S(B)\cdot A^{-1}$ with $S(B)=\{g\sth gB\cap B\neq \emptyset\}$, meaning that the recursive argument usually involves modifying both $A$ and $B$ at each stage. While this strategy is relatively straightforward when $G$ acts on itself and $\Lambda=A=B$, it is completely unclear how to change $A$ and $B$ for arbitrary group actions. Our solution is to keep $A$ and $B$ fixed and use $\Gamma$ to control $S_\Gamma(AB)$.

\begin{theo}[Recursive Massicot-Wagner] \label{t:recursive massicot wagner} Let $G\curvearrowright E$ be a group acting on a set and let $\Lambda$ be an approximate subgroup of $G$. Let $(E,\mathcal{F},m)$ be a mean space invariant by the action of $G$. Let $(\ell_k)_{k\in\N}$ be a thick Massicot-Wagner system in $\Lambda$ (relative to $\langle\Lambda\rangle$). Let $A\in \bigcap \ell_k$, $B\in \mathcal{F}$ and $n\in\N$ satisfy:
\begin{enumerate}[label={\rm (\alph*)}, wide]
\item $A'B\in \mathcal{F}$ for any $A'\subseteq A$ which is a finite intersection of translates of $A$. 
\item $0<m(B)\leq m(AB)<\infty$.
\item $S(AB)\coloneqq\{g\in \langle \Lambda\rangle\sth m(gAB\cap AB)>0\}\subseteq \Lambda^n$.
\end{enumerate}
Then, $\Lambda^n$ has a locally compact model.
\end{theo}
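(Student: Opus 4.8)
The plan is to derive the theorem from the characterisation of locally compact models recalled above, \cite[Theorem~3.5]{machado2025closed}. Since $\langle\Lambda\rangle=\langle\Lambda^n\rangle\subseteq\Comm(\Lambda^n)$ by \cref{f:commensurator}, it suffices to exhibit a decreasing sequence $(D_i)_{i\in\N}$ of approximate subgroups, each contained in and commensurable with $\Lambda^n$, with $D_{i+1}^2\subseteq D_i$. It is in fact enough to build such a sequence with the weaker relation $D_{i+1}^k\subseteq D_i$ for a fixed $k$: a subsequence $E_j:=D_{mj}$ with $m$ large then satisfies $E_{j+1}^2\subseteq E_{j+1}^k\subseteq\dots\subseteq E_j$ (using $1_G\in E_{j+1}$), so any such chain does the job.

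The chain is produced by iterating the Basic Massicot--Wagner \cref{l:basic massicot wagner}, keeping $A$, $B$ and the system $\ell$ fixed and only shrinking the parameter $\Gamma$ --- this is the purpose $\Gamma$ was built into the statement for. Start with $D_0:=\Lambda^n$, so hypothesis (c) reads $S_{\langle\Lambda\rangle}(AB)\subseteq D_0$. At the recursive step one has an approximate subgroup $D_i$ contained in and commensurable with $\Lambda$, and one chooses a subset $\Lambda_{i+1}\subseteq\Lambda_i$ (with $\Lambda_0:=\Lambda$) that is still commensurable with $\Lambda$, contains $1_G$, and is chosen small enough that $\langle\Lambda_{i+1}\rangle\cap S(AB)\subseteq D_i$. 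By the remark following the definition of thick Massicot--Wagner systems, $\ell$ is a thick Massicot--Wagner system in $\Lambda_{i+1}$ relative to $\Gamma_{i+1}:=\langle\Lambda\rangle\cap\Lambda_{i+1}^{-1}\Lambda_{i+1}$; since $\Lambda_{i+1}$ is commensurable with $\Lambda$, any set thick in $\Lambda_{i+1}$ covers $\Lambda$ by finitely many translates (\cref{r:thick and commensurable}), so the condition $\boldsymbol\ast_{\mathrm{t}}$ in $\Lambda_{i+1}$ upgrades to $\boldsymbol\ast_{\mathrm{g}}$ in $\Lambda$, i.e. $\ell$ is a Massicot--Wagner system in $\Lambda$ relative to $\Gamma_{i+1}$. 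Applying \cref{l:basic massicot wagner} with this system, with $A\in\bigcap\ell_k$ and $B$ (hypotheses (a),(b) unchanged), with $\Gamma=\Gamma_{i+1}$ and $n=2$, yields a symmetric $D_{i+1}\ni 1_G$ with $D_{i+1}\subseteq\Gamma_{i+1}\cap AA^{-1}$, finitely many translates of $D_{i+1}$ covering $\Lambda$, and $D_{i+1}^2\subseteq S_{\Gamma_{i+1}}(AB)$.

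Granting the choice of $\Lambda_{i+1}$, the rest is bookkeeping: as $\Gamma_{i+1}\subseteq\Lambda_{i+1}^{-1}\Lambda_{i+1}\subseteq\langle\Lambda_{i+1}\rangle$ we get $S_{\Gamma_{i+1}}(AB)=\langle\Gamma_{i+1}\rangle\cap\{g\sth m(gAB\cap AB)>0\}\subseteq\langle\Lambda_{i+1}\rangle\cap S(AB)\subseteq D_i$, so $D_{i+1}^2\subseteq D_i$; then $D_{i+1}\subseteq D_{i+1}^2\subseteq D_i\subseteq\dots\subseteq\Lambda^n$, so $D_{i+1}$ is contained in $\Lambda^n$ and commensurable with $\Lambda$ (its translates cover $\Lambda$, and it lies inside the commensurable $\Lambda^n$), and, being symmetric, containing $1_G$, and lying in $\langle\Lambda\rangle\subseteq\Comm(\Lambda)$, it is an approximate subgroup by \cref{l:symmetric sets of the commensurator}. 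The sequence $(D_i)$ is thus the required chain, and \cite[Theorem~3.5]{machado2025closed} provides a locally compact model of $\Lambda^n$ with domain $\langle\Lambda\rangle$.

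The step I expect to be the main obstacle, and the one I would think hardest about, is the production at each stage of a commensurable subset $\Lambda_{i+1}\subseteq\Lambda_i$ with $\langle\Lambda_{i+1}\rangle\cap S(AB)\subseteq D_i$. One cannot simply take $\Lambda_{i+1}=\Lambda_i\cap D_i$, since the intersection of commensurable approximate subgroups need not be commensurable with them; instead one must use that $D_i$ itself arises from \cref{l:basic massicot wagner} relative to $\Gamma_i$ --- so it contains a core set thick in $\Lambda_i$ --- to descend into $D_i$ by a commensurable subset of $\Lambda$, forcing the groups $\langle\Lambda_{i+1}\rangle$ to shrink fast enough that the bound $S_{\Gamma_{i+1}}(AB)\subseteq D_i$ propagates rather than degenerating to the constant bound $S(AB)\subseteq\Lambda^n$. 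This is exactly the point where relativising by $\Gamma$ takes over, for group actions, the role played in the classical symmetric argument by shrinking $A$ and $B$ at each stage.
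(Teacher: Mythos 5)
Your skeleton (build a nested chain of commensurable approximate subgroups and invoke \cite[Theorem~3.5]{machado2025closed}) is the right frame, but the proof has a genuine gap at exactly the point you flag: the recursive production of a set $\Lambda_{i+1}\subseteq\Lambda_i$, commensurable with $\Lambda$, such that $\langle\Lambda_{i+1}\rangle\cap S(AB)\subseteq D_i$. You give no construction of such a set, only the hope that the ``core set thick in $\Lambda_i$'' inside $D_i$ allows one to descend. Nothing in \cref{l:basic massicot wagner} or in the thickness remarks gives any control over the \emph{subgroup generated by} a commensurable subset: $\langle\Lambda_{i+1}\rangle$ contains every power of a set commensurable with $\Lambda$, so its trace on $S(AB)$ is in general a large, spread-out portion of $S(AB)$, and requiring it to land inside the particular output $D_i\subseteq\Gamma_i\cap AA^{-1}$ of the basic lemma is a structural claim essentially as strong as the theorem itself. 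So the argument is incomplete at its crux, and I do not see how to complete it along these lines. (A side remark: your reduction to $D_{i+1}^k\subseteq D_i$ with a subsequence is superfluous, since $1_G\in D_{i+1}$ already gives $D_{i+1}^2\subseteq D_{i+1}^k$.)

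The paper's proof avoids this obstacle entirely, and the difference is instructive. It never shrinks $\Lambda$ and never forces $S_\Gamma(AB)$ inside $D_i$: it sets $D_0=\Lambda$ and, at each stage, replaces $\Lambda$ by $D_i$ itself and takes $\Gamma=\langle D_i\rangle$. The key observation you are missing is that one only needs $S_{D_i}(AB)\subseteq D_i^{k_i}$ for \emph{some} finite $k_i$, and this is automatic: $S_{D_i}(AB)\subseteq S(AB)\subseteq\Lambda^n$ is covered by finitely many translates of $D_i$ by commensurability, and by \cref{r:index intrinsic} the translating elements can be taken inside $\langle D_i\rangle$, hence in a bounded power of $D_i$. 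The free exponent in \cref{l:basic massicot wagner} then absorbs this power: applying the lemma (to $D_i$, relative to $\langle D_i\rangle$, the system $\ell$ being inherited) with exponent $2k_i$ yields $D_{i+1}$ with $D_{i+1}^{2k_i}\subseteq S_{D_i}(AB)\subseteq D_i^{k_i}$, and the resulting powers $D_i^{k_i}$ --- approximate subgroups commensurable with $\Lambda^n$ by \cref{l:symmetric sets of the commensurator} and \cref{f:commensurator} --- form the nested chain fed into \cite[Theorem~3.5]{machado2025closed}. In short, the relativisation by $\Gamma$ is exploited by enlarging $\Gamma$ to $\langle D_i\rangle$ and paying with a bigger exponent, not by shrinking $\Lambda$ to make a subgroup trace small; the latter is precisely the step your proposal cannot supply.
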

\begin{proof} By recursion, we define a sequence of numbers $(k_i)_{i\in\N}$ in $\N_{>0}$ and a sequence $(D_i)_{i\in\N}$ of commensurable symmetric subsets containing the identity such that $D^{2k_i}_{i+1}\subseteq D^{k_i}_i$. Set $D_0=\Lambda$. Suppose $D_i$ is given. Since $D_i$ is commensurable to $D_0=\Lambda$, we find that finitely many translates of $D_i$ cover $S_{D_i}(AB)=\{g\in\langle D_i\rangle \sth m(gAB\cap AB)>0\}$. Therefore, there is $k_i$ such that $S_{D_i}(AB)\subseteq D_i^{k_i}$ (in particular, we take $k_0=n$). By \cref{r:hereditary thick}, we note that $\ell$ is a thick Massicot-Wagner system in $D_i$ (relative to $\langle D_i\rangle$). By \cref{l:basic massicot wagner} applied to $D_i$, there is a symmetric subset $D_{i+1}\subseteq \langle D_i\rangle$ containing the identity such that finitely many translates of $D_{i+1}$ cover $D_i$ and $D_{i+1}^{2k_i}\subseteq S_{D_i}(AB)\subseteq D_i^{k_i}$. Now, consider the decreasing sequence $(D^{k_i}_i)_{i\in\N}$ of subsets of $\Lambda^n$. Then, this sequence satisfies the conditions of \cite[Theorem~3.5]{machado2025closed}. Consequently, $\Lambda^n$ has a locally compact model. 
\end{proof}
\begin{proof}[Proof of \cref{t:recursive massicot wagner mean}] It is a particular case of \cref{t:recursive massicot wagner}, applying \cref{l:massicot wagner systems from means}.
\end{proof}

\

\section{Definable versions}
For this section, fix a language $\lang$ and an $\lang$-structure $M$. 
\begin{enumerate}[label={}, wide]
\item By \emph{definable} we mean definable with parameters. For a subset $A$, \emph{$A$-definable} means definable with parameters in $A$. By $0$-definable we mean definable without parameters.
\item By \emph{$\bigvee_{<\kappa}$-definable} we mean a union of less than $\kappa$ definable sets. For a subset $A$, by \emph{$\bigvee_A$-definable} we mean a union of $A$-definable sets.
\item By \emph{$\bigwedge_{<\kappa}$-definable} we mean a conjunction of less than $\kappa$ definable sets, i.e. \emph{type-definable} by a partial type (with parameters) of size at most $\kappa$. For a subset $A$, by \emph{$\bigwedge_A$-definable} we mean a conjunction of $A$-definable sets, i.e. \emph{type-definable} by a partial type with parameters in $A$.
\item By \emph{semipositively definable} we mean definable by a formula using $\wedge,\vee,\exists,\forall$ but no $\neg$. By \emph{semipositively $\bigvee$-definable}, we mean a $\bigvee$-definable set which is a union of semipositively definable sets.
\item By the \emph{$A$-logic topology} we mean the topology whose closed sets are the $\bigwedge_A$-definable subsets.
\item Let $G$ be a definable group and $\Lambda\subseteq G$ a definable approximate subgroup. Let $H\leq G$ with $\Lambda\subseteq H$. Let $f\map H\to L$ be a locally compact model of $\Lambda$ with domain $H$. We say that $f$ is \emph{$A$-definable} if $H$ is $\bigvee_A$-definable and $f$ is continuous from the $A$-logic topology.
\end{enumerate}
\begin{rmk} The definition of definable locally compact models (Lie models) we have introduced agrees with the typical notion used in model theory. The typical definition in model theory is a pair $(K,H)$ with $K\triangleleft H\leq G$ where $K$ is $\bigwedge_A$-definable contained in $\Lambda$, $H$ is $\bigvee_A$-definable and $K$ has bounded index in $H$. Indeed, for such a pair, the quotient map $\pi\map H\to \faktor{H^*}{K^*}$ is a locally compact model of $\Lambda$, where $H^*/K^*$ is computed in an enough saturated elementary extension and with the global logic topology. We refer to \cite{rodriguez2023piecewise} for more details.

On the other hand, suppose $f\map H\to L$ is $A$-definable. Consider a sufficiently saturated elementary extension $M^*$, and let $G^*$, $\Lambda^*$ and $H^*$ be the corresponding elementary extensions of $G$, $\Lambda^*$ and $H^*$. Note that $G\subseteq G^*$ is dense in the $A$-logic topology. Consequently, there is a unique continuous extension $f^*\map H^*\to L$ of $f$. By continuity, $f^*$ is a group homomorphism. Since $f^{-1}(U)\subseteq \Lambda$, we get that ${f^*}^{-1}(U)\subseteq \Lambda^*$ by continuity. Also, $f^*(\Lambda^*)\subseteq \overline{f(\Lambda)}$ by continuity as $\Lambda$ is dense in $\Lambda^*$. Therefore, $f^*\map H^*\to L$ is a locally compact model too. For any compact set $C\subseteq L$, we have that ${f^*}^{-1}(C)\subseteq \Delta\Lambda^*$ for some finite set $\Delta$, so ${f^*}^{-1}(C)$ is $\bigwedge_A$-definable by continuity. Consequently, $f^*$ is a proper map. Since $L$ is locally compact Hausdorff, by \cite[Theorem~3.7.2]{engelking1989general}, $f^*$ is also a closed map. By the closed isomorphism theorem for topological groups \cite[Remark~2.1(4)]{rodriguez2023piecewise}, $f^*$ factors as an isomorphism $\faktor{H^*}{K^*}\cong L$ with $K^*=\ker\, f^*$ and $\faktor{H^*}{K^*}$ with the $A$-logic topology. Since $L$ is Hausdorff, we get that $K^*$ has bounded index in $H^*$. Thus, the restriction $(K,H)$ to $M$ is a locally compact model (Lie model) in the usual sense of model theory.
\end{rmk}

Let $G$ be a definable group and $\Lambda,\Gamma\subseteq G$. A \emph{definable Massicot-Wagner system} in $\Lambda$ relative to $\Gamma$ is a sequence $\ell=(\ell_k)_{k\in\N}$ of families of non-empty definable subsets of $G$ with the following property:
\begin{enumerate}[wide]
\item[$\boldsymbol\ast_{\mathrm{dg}}$] If $W\in \ell_k$, then $\Lambda$ is covered by finitely many translates of some definable subset of $\{g\in \Gamma\sth gW\cap W\in \ell_{k-1}\text{ or }g^{-1}W\cap W\in\ell_{k-1}\}$.  
\end{enumerate}

A \emph{definable thick Massicot-Wagner system} in $\Lambda$ relative to $\Gamma$ is a sequence $(\ell_k)_{k\in\N}$ of families of non-empty definable subsets of $G$ with the following property:
\begin{enumerate}[wide]
\item[$\boldsymbol\ast_{\mathrm{dt}}$] If $W\in \ell_k$, then a definable subset of $\{g\in \Gamma\sth gW\cap W\in \ell_{k-1}\text{ or }g^{-1}W\cap W\in\ell_{k-1}\}$ is thick in $\Lambda$.  
\end{enumerate}
\begin{rmk} We are interested on the family of definable subsets of $G$. However, note that considering other families $\mathcal{D}$, one can achieve similar notions of $\mathcal{D}$ (thick) Massicot-Wagner systems.\end{rmk}
  
In the definable context, we redefine $\ell^{\mathrm{g}}$ and $\ell^{\mathrm{t}}$ as follows:

\begin{defi} Let $G$ be a definable group and $\Lambda,\Gamma\subseteq G$. 
\begin{enumerate}[label={\rm(\alph*)}, wide]
\item The \emph{largest definable Massicot-Wagner system} $\ell^{\mathrm{dg}}\coloneqq\ell^{\mathrm{dg}}(\Lambda;\Gamma)$ in $\Lambda$ relative to $\Gamma$ is recursively defined (for $W\subseteq G$ definable) by 
\[\begin{array}{ccl} W\in \ell^{\mathrm{dg}}_0&\Leftrightarrow &W\neq \emptyset;\\ W\in \ell^{\mathrm{dg}}_k&\Leftrightarrow &\Lambda\text{ is covered by finitely many translates of a definable }\\ &&\text{ subset of }\{g\in \Gamma\sth gW\cap W\in \ell^{\mathrm{dg}}_{k-1}\text{ or }g^{-1}W\cap W\in\ell^{\mathrm{dg}}_{k-1}\}.\end{array}\]
\item The \emph{largest definable thick Massicot-Wagner system} $\ell^{\mathrm{dt}}\coloneqq\ell^{\mathrm{dt}}(\Lambda;\Gamma)$ in $\Lambda$ relative to $\Gamma$ is recursively defined (for $W\subseteq G$) by
\[\begin{array}{ccl} W\in \ell^{\mathrm{dt}}_0&\Leftrightarrow &W\neq \emptyset;\\ W\in \ell^{\mathrm{dt}}_k&\Leftrightarrow &\text{there is a definable subset of }\\ &&\{g\in \Gamma\sth gW\cap W\in \ell^{\mathrm{dt}}_{k-1}\text{ or }g^{-1}W\cap W\in\ell^{\mathrm{dt}}_{k-1}\}\text{ thick in }\Lambda .\end{array}\]
\end{enumerate}
\end{defi} 

We recall the following result from \cite{hrushovski2022amenability}:
\begin{lem} \label{l:definability massicot system} Suppose $G$ is $0$-definable, $\Lambda$ is $0$-definable and $\Gamma$ is $\bigvee_\emptyset$-definable. Consider the Massicot-Wagner systems $\ell^{\mathrm{g}}\coloneqq \ell^{\mathrm{g}}(\Lambda,\Gamma)$ and $\ell^{\mathrm{t}}\coloneqq\ell^{\mathrm{t}}(\Lambda,\Gamma)$. Let $\varphi(x;\bar{y})$ be a formula without parameters where $x$ is a variable in the sort of $G$. Then,
\[\{\bar{b}\sth \varphi(G;\bar{b})\in\ell^{\mathrm{g}}_k\}\text{ and }\{\bar{b}\sth \varphi(G;\bar{b})\in\ell^{\mathrm{t}}_k\}\text{ are }\bigvee_\emptyset\text{-definable.}\] 
Furthermore, if $G$ and $G\setminus\Lambda$ are semipositively definable, $\Gamma$ is semipositively $\bigvee$-definable and $\varphi(x;\bar{y})$ is a semipositive formula, then 
\[\{\bar{b}\sth \varphi(G;\bar{b})\in\ell^{\mathrm{g}}_k\}\text{ and }\{\bar{b}\sth \varphi(G;\bar{b})\in\ell^{\mathrm{t}}_k\}\text{ are semipositive }\bigvee_\emptyset\text{-definable.}\] 
\end{lem}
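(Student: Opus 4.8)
The plan is to prove \cref{l:definability massicot system} by induction on $k$, tracking simultaneously the complexity of the two families $\{\bar b : \varphi(G;\bar b)\in\ell^{\mathrm{g}}_k\}$ and $\{\bar b : \varphi(G;\bar b)\in\ell^{\mathrm{t}}_k\}$ as $\bar y$ ranges, and doing this uniformly over all formulas $\varphi(x;\bar y)$ (so the induction hypothesis is a statement quantified over formulas, which is what makes the recursion go through). For $k=0$ the condition is just $\varphi(G;\bar b)\neq\emptyset$, i.e. $\exists x\,\varphi(x;\bar b)$, which is definable, and semipositive if $\varphi$ is (since $\exists$ is allowed). For the inductive step one has to unwind the recursive definition: $\varphi(G;\bar b)\in\ell^{\mathrm{g}}_k$ says that finitely many left translates of \emph{some} definable subset of $D_{\bar b}:=\{g\in\Gamma : g\varphi(G;\bar b)\cap\varphi(G;\bar b)\in\ell^{\mathrm{g}}_{k-1}\text{ or }g^{-1}\varphi(G;\bar b)\cap\varphi(G;\bar b)\in\ell^{\mathrm{g}}_{k-1}\}$ cover $\Lambda$.

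The key observation that makes the bookkeeping manageable is that the set $D_{\bar b}$ is itself uniformly $\bigvee$-definable in $\bar b$: writing $\psi(x;z,\bar y)$ for the formula $\varphi(x;\bar y)\wedge\varphi(z^{-1}x;\bar y)$ (so that $\psi(G;g,\bar b)$ picks out $g\varphi(G;\bar b)\cap\varphi(G;\bar b)$ after the substitution $x\mapsto gx$), the condition ``$g\varphi(G;\bar b)\cap\varphi(G;\bar b)\in\ell^{\mathrm{g}}_{k-1}$'' becomes ``$\psi(G;g,\bar b)\in\ell^{\mathrm{g}}_{k-1}$'', to which the induction hypothesis for the formula $\psi$ and the parameter tuple $(g,\bar b)$ applies: the set of $(g,\bar b)$ satisfying it is $\bigvee_\emptyset$-definable, hence $D_{\bar b}=\Gamma\cap\bigcup_j\theta_j(G;\bar b)$ for suitable formulas $\theta_j$, and this is $\bigvee_\emptyset$-definable uniformly in $\bar b$ (using that $\Gamma$ is $\bigvee_\emptyset$-definable). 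Then ``finitely many translates of some definable subset of $D_{\bar b}$ cover $\Lambda$'' unwinds to a disjunction over $N\in\N$ and over formulas $\chi$ of the statement ``$\chi(G;\bar b,\bar c)\subseteq D_{\bar b}$ and $\exists h_1,\dots,h_N\,\Lambda\subseteq\bigcup_i h_i\chi(G;\bar b,\bar c)$ for some $\bar c$'' --- the containment ``$\chi\subseteq D_{\bar b}$'' is $\bigwedge$-type-definable a priori, but because $D_{\bar b}$ is a \emph{union} of definable pieces and $\chi(G;\bar b,\bar c)$ is a single definable set, one uses compactness (or rather just reindexes) to replace ``$\chi\subseteq D_{\bar b}$'' by ``$\chi\subseteq$ one of the definable pieces'', restoring $\bigvee$-definability; the cover condition is plainly definable (an $\exists$ over $N$ group elements together with containment of $\Lambda$, which is fine since $\Lambda$ is $0$-definable and appears positively once one writes $\Lambda\subseteq X$ as $\forall x(x\in\Lambda\to x\in X)$, and in the semipositive case one rewrites this using $G\setminus\Lambda$, which is assumed semipositively definable, as $\forall x(x\in X\vee x\in G\setminus\Lambda)$). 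The thick case $\ell^{\mathrm{t}}$ is handled identically, replacing ``finitely many translates cover $\Lambda$'' by ``$k'$-thick in $\Lambda$'' for some $k'\in\N$, which expands to a disjunction over $k'$ of a $\forall$ over $k'+1$ elements of $\Lambda$ of a disjunction of the membership conditions $b_i^{-1}b_j\in\chi(G;\bar b,\bar c)$ --- again visibly (semipositively) definable, since thickness involves no negation once $G\setminus\Lambda$ is available.

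The main obstacle, and the point that needs genuine care rather than routine unwinding, is the interaction between the \emph{unbounded} disjunctions (the ``finitely many'' and the ``some definable subset'', i.e. the quantification over $N\in\N$ and over the formula $\chi$ and its parameters) and the requirement of staying within $\bigvee_\emptyset$-definability rather than merely $\bigvee_{<\kappa}$-definability for large $\kappa$: one must check that at each stage only countably many formulas and finitely many integers are involved, so that the resulting disjunction is over a set of size continuum at worst but, more importantly, is indexed by $\emptyset$-definable data (formulas over $\emptyset$ applied to arbitrary parameters) --- this is exactly the content of ``$\bigvee_\emptyset$-definable'' and it survives the induction because substituting parameters into a fixed $\emptyset$-formula and then taking a union over all parameter choices is precisely the $\bigvee_\emptyset$ operation, which is idempotent under the combinations (finite intersection, arbitrary union, coordinate projection $\exists$, and --- the subtle one --- the ``$\subseteq$ a definable piece of a $\bigvee_\emptyset$-set'' move) used above. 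The semipositive refinement then requires only that every logical operation invoked in the unwinding be among $\wedge,\vee,\exists,\forall$: the sole place a complement could sneak in is the containment $\Lambda\subseteq X$ and the thickness condition's reference to membership in $\Lambda$, and both are neutralised by the hypothesis that $G\setminus\Lambda$ is semipositively definable, so no $\neg$ is ever needed. I would present the argument as a single induction proving both clauses at once, with the $\bigvee$-version and the semipositive $\bigvee$-version run in parallel (the only difference being which closure properties of the ambient class of ``allowed'' definitions one invokes), and I would isolate as a preliminary remark the stability of $\bigvee_\emptyset$-definability (resp. semipositive $\bigvee_\emptyset$-definability) under the handful of operations just listed, so that the inductive step reduces to ``apply the induction hypothesis to $\psi$ and to $\chi$, then close under the allowed operations''.
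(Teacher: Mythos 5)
Your overall strategy is the paper's: induction on $k$ carried out uniformly over formulas, the substitution trick that turns ``$g\varphi(G;\bar b)\cap\varphi(G;\bar b)\in\ell_{k-1}$'' into an instance of the induction hypothesis for the formula $\varphi(x;\bar y)\wedge\varphi(zx;\bar y)$ (resp.\ $\varphi(z^{-1}x;\bar y)$) with parameter tuple $(z,\bar b)$, the intersection with a $\bigvee_\emptyset$-presentation of $\Gamma$, and the use of $G\setminus\Lambda$ to write the bounded quantifiers over $\Lambda$ without negation. The genuine gap is in how you express the covering/thickness clause of the inductive step. You unwind it as ``there exist a formula $\chi$ and parameters $\bar c$ with $\chi(G;\bar b,\bar c)\subseteq D_{\bar b}$ such that finitely many translates of $\chi(G;\bar b,\bar c)$ cover $\Lambda$'', and this auxiliary containment breaks the argument twice over. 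First, $D_{\bar b}$ is only a union of $\emptyset$-definable pieces, and the reduction of ``$\chi\subseteq D_{\bar b}$'' to ``$\chi\subseteq$ a single piece'' is not available by ``compactness (or rather just reindexes)'': no saturation of $M$ is assumed anywhere in \cref{l:definability massicot system} (avoiding saturation is exactly the point of the semipositive refinement, and the relation between the saturated and unsaturated readings is dealt with separately in the corollary following the lemma), and in a non-saturated structure a definable set can be contained in an infinite union of definable sets without being contained in any finite subunion. Second, and independently of that, even the reduced containment ``$\chi(G;\bar b,\bar c)\subseteq\delta(G;\bar b)$'' is the formula $\forall x\,(\chi(x;\bar b,\bar c)\rightarrow\delta(x;\bar b))$, whose antecedent occurs negatively; since $\chi$ ranges over arbitrary formulas, this destroys the ``Furthermore'' (semipositive) conclusion, contrary to your claim that the only place a negation could sneak in is through $\Lambda$.

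The paper's proof never quantifies over auxiliary definable subsets, so no containment condition (hence no hidden negation and no compactness) ever appears: the induction hypothesis presents the relevant relative-to-$\Gamma$ set as an explicit disjunction of $\emptyset$-formulas, and the quantifiers $\exists^{\mathrm{gen}}$ and $\exists^{\mathrm{thi}}$ (expressing ``finitely many translates of $\psi(G;\bar b)$ cover $\Lambda$'' and ``$\psi(G;\bar b)$ is thick in $\Lambda$'') are applied directly to each canonical disjunct, which is automatically a subset of $D_{\bar b}$. Your argument is repaired precisely by deleting the quantification over $\chi$ together with the clause $\chi\subseteq D_{\bar b}$ and letting the ``definable subset'' be one of the disjuncts produced by the recursion itself; at that point your induction coincides with the paper's proof. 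Note also that quantifying over arbitrary definable subsets is not what the recursion for $\ell^{\mathrm{g}}$ or $\ell^{\mathrm{t}}$ literally demands (those clauses concern the whole set $D_{\bar b}$); the passage between the two readings is the content of the $\aleph_0$-saturation corollary, not a step to be used inside this proof.
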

This was already noted (with $\Gamma=\Lambda=G$) in \cite[Remark~2.2]{hrushovski2022amenability}. Nevertheless, for the sake of completeness, we provide the proof.
\begin{proof} 
For a formula $\psi(x;\bar{y})$, we write
\[\exists^{k\,\mathrm{gen}}x\, \psi(x;\bar{y})\coloneqq \exists x_1,\ldots,x_k\in G\, \forall z\in \Lambda\, \bigvee^k_{i=1} \psi(x_iz;y),\]
\[\exists^{k\,\mathrm{thi}}x\, \psi(x;\bar{y})\coloneqq \forall x_1,\ldots,x_k\in\Lambda\, \bigvee_{i\neq j} \psi(x_i^{-1}x_j;\bar{y}).\]
For $\bigvee_{<\omega}$-definable sets, we write $\exists^{\mathrm{gen}}x\,\psi(x;\bar{y})=\bigvee_{k\in\N}\exists^{k\,\mathrm{gen}}x\,\psi(x;\bar{y})$ and, similarly, $\exists^{\mathrm{thi}}x\,\psi(x;\bar{y})=\bigvee_{k\in\N}\exists^{k\,\mathrm{thi}}x\,\psi(x;\bar{y})$.

Consequently, $\models\exists^{\mathrm{gen}}x\,\psi(x;\bar{b})$ if and only if finitely many translates of $\psi(G;\bar{b})$ cover $\Lambda$. Similarly, $\models\exists^{\mathrm{thi}}x\,\psi(x;\bar{b})$ if and only if $\psi(G;\bar{b})$ is thick in $\Lambda$.

For $\psi(x;\bar{y})=\bigvee_{i\in I}\psi_i(x;\bar{y})$ with $I$ infinite,  $\exists^{\mathrm{gen}}x\,\psi(x;\bar{y})=\bigvee_{i\in I}\exists^{\mathrm{gen}}x\,\psi_i(x;\bar{y})$ and $\exists^{\mathrm{thi}}x\,\psi(x;\bar{y})=\bigvee_{i\in I}\exists^{\mathrm{thi}}x\,\psi_i(x;\bar{y})$. Consequently, $\models\exists^{\mathrm{gen}}x\,\psi(x;\bar{b})$ if and only if finitely many translates of $\psi_i(G,\bar{b})$ cover $\Lambda$ for some $i\in I$ and $\models\exists^{\mathrm{gen}}x\,\psi(x;\bar{b})$ if and only if $\psi_i(G,\bar{b})$ is thick in $\Lambda$ for some $i\in I$.

For a formula $\psi(x;\bar{y})$, write $\ell^{\mathrm{dg}}_k(\psi(x;\bar{y}))=\{\bar{b}\sth \psi(G;\bar{b})\in\ell^{\mathrm{dg}}_k\}$ and $\ell^{\mathrm{dt}}_k(\psi(x;\bar{y}))=\{\bar{b}\sth \psi(G;\bar{b})\in\ell^{\mathrm{dt}}_k\}$. If $D=\psi(G;\bar{b})$, we get that $\{g\in \Gamma\sth gD\cap D\in\ell^{\mathrm{dg}}_k\text{ or }g^{-1}D\cap D\in\ell^{\mathrm{dg}}_k\}=(\ell^{\mathrm{dg}}_k(\psi(x;\bar{b})\wedge\psi(z^{-1}x;\bar{b}))\vee \ell^{\mathrm{dg}}_k(\psi(x;\bar{b})\wedge\psi(zx;\bar{b})) )\wedge \underline{\Gamma}(z)$ where $\underline{\Gamma}(z)=\bigvee_{j\in J} \theta_i(z)$ defines $\Gamma$. Similarly, $\{g\in \Gamma\sth gD\cap D\in\ell^{\mathrm{dt}}_k\text{ or }g^{-1}D\cap D\in\ell^{\mathrm{dt}}_k\}=(\ell^{\mathrm{dt}}_k(\psi(x;\bar{b})\wedge\psi(z^{-1}x;\bar{b}))\vee \ell^{\mathrm{dt}}_k(\psi(x;\bar{b})\wedge\psi(zx;\bar{b})))\wedge \underline{\Gamma}(z)$.

Hence, we can rewrite the definitions of $\ell^{\mathrm{dg}}$ and $\ell^{\mathrm{dt}}$ as follows:
\begin{enumerate}[align=left, wide]
\item[For $k=0$]
\[\ell^{\mathrm{dg}}_0(\psi(x;\bar{y}))=\ell^{\mathrm{dt}}_0(\psi(x;\bar{y}))=\{\bar{b}\sth \varphi(G;\bar{b})\neq\emptyset\}=\{\bar{b}\sth \exists x\in G\mathrel{} \varphi(x;\bar{b})\}.\]
\item[For $k>0$]
\[\ell^{\mathrm{dg}}_k(\psi(x;\bar{y})){=}\{\bar{b}\sth \exists^{\mathrm{gen}}z ((\ell^{\mathrm{dg}}_{k-1}(\psi(x;\bar{b})\wedge\psi(z^{-1}x;\bar{b})\vee \ell^{\mathrm{dg}}_{k-1}(\psi(x;\bar{b})\wedge\psi(zx;\bar{b}))\wedge\underline{\Gamma}(z))\}\]
\[\ell^{\mathrm{dt}}_k(\psi(x;\bar{y})){=}\{\bar{b}\sth \exists^{\mathrm{thi}}z((\ell^{\mathrm{dt}}_{k-1}(\psi(x;\bar{b})\wedge\psi(z^{-1}x;\bar{b})\vee\ell^{\mathrm{dt}}_{k-1}(\psi(x;\bar{b})\wedge\psi(zx;\bar{b}))\wedge\underline{\Gamma}(z))\}.\]
\end{enumerate}
The conclusion is now evident. 
\end{proof}
By compactness, we get the following consequence:
\begin{coro} If $M$ is $\aleph_0$-saturated and $G$ and $\Lambda$ are definable and $\Gamma$ is $\bigvee$-definable, we get that $\ell^{\mathrm{dg}}$ and $\ell^{\mathrm{dt}}$ are simply the restrictions of $\ell^{\mathrm{g}}$ and $\ell^{\mathrm{t}}$ to the definable subsets.
\end{coro}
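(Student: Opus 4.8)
The plan is to prove, by induction on $k$, that for every \emph{definable} $W\subseteq G$ one has $W\in\ell^{\mathrm{g}}_k\iff W\in\ell^{\mathrm{dg}}_k$ and $W\in\ell^{\mathrm{t}}_k\iff W\in\ell^{\mathrm{dt}}_k$. The base case $k=0$ is trivial, since both sides simply assert $W\neq\emptyset$. For the inductive step fix a definable $W=\varphi(G;\bar b)$; then $gW\cap W$ and $g^{-1}W\cap W$ are definable uniformly in $g$, so the inductive hypothesis makes the ``witness set''
\[
E\coloneqq\{g\in\Gamma\sth gW\cap W\in\ell^{\mathrm{g}}_{k-1}\ \text{or}\ g^{-1}W\cap W\in\ell^{\mathrm{g}}_{k-1}\}
\]
coincide with the one obtained by replacing $\ell^{\mathrm{g}}_{k-1}$ by $\ell^{\mathrm{dg}}_{k-1}$, and likewise in the thick setting. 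Now $W\in\ell^{\mathrm{g}}_k$ says exactly that finitely many translates of $E$ cover $\Lambda$, while $W\in\ell^{\mathrm{dg}}_k$ says finitely many translates of some \emph{definable} $D\subseteq E$ cover $\Lambda$; hence $\ell^{\mathrm{dg}}_k\Rightarrow\ell^{\mathrm{g}}_k$ is immediate, and the same holds in the thick case (a thick definable $D\subseteq E$ makes $E$ thick). So everything reduces to the converse implications.

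For these I would use the recursive description of $\ell^{\mathrm{dg}}$ and $\ell^{\mathrm{dt}}$ established in the proof of \cref{l:definability massicot system}: after absorbing into the base the finitely many parameters defining $G$, $\Lambda$, $\Gamma$ and $W$, that description exhibits $E$ as a union $E=\bigcup_{i\in I}D_i$ of sets definable over this fixed finite parameter set. Suppose first $W\in\ell^{\mathrm{g}}_k$, so $\Lambda\subseteq g_1E\cup\dots\cup g_mE=\bigcup_{j,i}g_jD_i$; enlarging the base by the finitely many $g_1,\dots,g_m$, the partial type over this finite set asserting ``$x\in\Lambda$ and $x\notin g_jD_i$ for all $j\le m$, $i\in I$'' has no realisation, so by $\aleph_0$-saturation some finite subtype is inconsistent; this yields a finite $F\subseteq I$ with $\Lambda\subseteq g_1D\cup\dots\cup g_mD$ for the definable set $D\coloneqq\bigcup_{i\in F}D_i\subseteq E$, i.e. $W\in\ell^{\mathrm{dg}}_k$. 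In the thick case: if $E=\bigcup_{i\in I}D_i$ is $r$-thick in $\Lambda$ but no finite sub-union is $r$-thick, then the partial type (over the same finite set, in variables $x_1,\dots,x_{r+1}$) asserting ``each $x_s\in\Lambda$ and $x_s^{-1}x_t\notin D_i$ for all $s\neq t$, $i\in I$'' is finitely satisfiable, hence realised by $\aleph_0$-saturation, contradicting $r$-thickness of $E$; so some definable $D=\bigcup_{i\in F}D_i\subseteq E$ is $r$-thick in $\Lambda$, giving $W\in\ell^{\mathrm{dt}}_k$.

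The one point requiring care — and essentially the only obstacle — is the parameter bookkeeping: one must verify that $E$ is a union of sets definable over a \emph{finite} parameter set, so that $\aleph_0$-saturation (rather than a larger degree of saturation) suffices for the two compactness arguments. This is precisely what the ``$\bigvee_\emptyset$-definable'' conclusion of \cref{l:definability massicot system} delivers once the finitely many defining parameters of $G$, $\Lambda$, $\Gamma$, $W$ are moved into the base; with that settled, the compactness steps are routine.
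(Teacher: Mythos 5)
Your proposal is correct and is essentially the paper's own argument: the paper's proof is exactly the one-word ``by compactness'', meaning that \cref{l:definability massicot system} exhibits the witness set $\{g\in\Gamma\sth gW\cap W\in\ell_{k-1}\text{ or }g^{-1}W\cap W\in\ell_{k-1}\}$ as a union of definable sets over the relevant parameters, and $\aleph_0$-saturation then extracts a definable piece that still covers (resp.\ is thick in) $\Lambda$, with the induction on $k$ set up just as you do. The only point to flag --- a looseness shared with the corollary's statement itself --- is that your parameter bookkeeping tacitly assumes $\Gamma$ is a union of definable sets over finitely many parameters (as in the hypothesis of \cref{l:definability massicot system}, where $\Gamma$ is $\bigvee_\emptyset$-definable); under that reading the compactness steps go through exactly as you describe.
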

We also recall the following fact from \cite{hrushovski2022amenability}:
\begin{lem} \label{l:definable massicot wagner systems from means} Let $G$ be a definable group, $\Lambda\subseteq G$ definable and $(G,\mathcal{A},\mu)$ a mean space invariant by left translations. Let $A,C\in\mathcal{A}$ with $\Lambda A\subseteq C$ satisfy $0<\mu(A)\leq\mu(C)<\infty$. Then, $\ell^\mu\subseteq \bigcap_{k\in\N}\ell^{\mathrm{dt}}_k$.  
\end{lem}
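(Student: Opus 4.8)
The plan is to prove, by induction on $k$, that every $W\in\mathcal{A}$ with $W\subseteq A$ and $\mu(W)>0$ lies in $\ell^{\mathrm{dt}}_k$ --- this is the natural content, since by \cref{l:massicot wagner systems from means} it is precisely on the subsets of $A$ that $\ell^\mu$ behaves as a thick Massicot-Wagner system (relative to $\langle\Lambda\rangle$, with $\Lambda^{-1}\Lambda$ in the role of $\Gamma$). The base case $k=0$ is immediate: $\mu(W)>0$ forces $W\neq\emptyset$, so $W\in\ell^{\mathrm{dt}}_0$.

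For the inductive step, fix such a $W$ and put $S_\mu\coloneqq\{g\in\Lambda^{-1}\Lambda\sth\mu(gW\cap W)>0\}$. For every $g\in S_\mu$, invariance of the lattice $\mathcal{A}$ under left translations gives $gW\cap W\in\mathcal{A}$, while $gW\cap W\subseteq W\subseteq A$ and $\mu(gW\cap W)>0$; so the induction hypothesis yields $gW\cap W\in\ell^{\mathrm{dt}}_{k-1}$. Hence $S_\mu$ is contained in $T\coloneqq\{g\in\langle\Lambda\rangle\sth gW\cap W\in\ell^{\mathrm{dt}}_{k-1}\text{ or }g^{-1}W\cap W\in\ell^{\mathrm{dt}}_{k-1}\}$. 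On the other hand, \cref{l:massicot wagner systems from means} tells us that $S_\mu$ is $N$-thick in $\Lambda$ for some $N\in\N$, and therefore so is the larger set $T$. Thus, to conclude $W\in\ell^{\mathrm{dt}}_k$, it remains only to replace $T$ by a genuinely \emph{definable} subset of itself that is still thick in $\Lambda$.

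This last extraction is the crux, since $T$ and $S_\mu$ need not be definable on the nose. I would handle it by compactness. By \cref{l:definability massicot system} (applied with the $\bigvee$-definable $\Gamma=\langle\Lambda\rangle$), membership of a definable set in $\ell^{\mathrm{t}}_{k-1}$ is a $\bigvee$-definable condition on its parameters, so --- writing $W=\varphi(G;\bar{a})$ with $\varphi$ over $(G,\cdot,\Lambda,A)$, as in the intended application where $A$ is named in the language --- the set $T$ becomes $\bigvee$-definable (in a sufficiently saturated model, where, by the corollary to \cref{l:definability massicot system}, $\ell^{\mathrm{dt}}_{k-1}$ agrees with the restriction of $\ell^{\mathrm{t}}_{k-1}$ to definable sets). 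Now I would use the general principle that a $\bigvee$-definable set $T=\bigcup_i D_i$ which is $N$-thick in a definable set $\Lambda$ contains a definable $N$-thick subset: $N$-thickness says that the definable set $\Lambda^{N+1}$ is covered by the $\bigvee$-definable family $\{(b_1,\dots,b_{N+1})\sth b_i^{-1}b_j\in D_l\}$ over $i\neq j$ and indices $l$; by compactness finitely many pieces $D_{l_1},\dots,D_{l_m}$ suffice, and $D\coloneqq D_{l_1}\cup\dots\cup D_{l_m}$ is then a definable subset of $T$ that is $N$-thick in $\Lambda$. Such a $D$ witnesses $W\in\ell^{\mathrm{dt}}_k$ and closes the induction; the case of a general $W\in\mathcal{A}$ is dealt with in the same way inside a monster model, and in any event it is the case of definable $W$ that \cref{t:recursive massicot wagner mean} uses. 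Everything outside this definabilisation step is just unwinding the definitions together with \cref{l:massicot wagner systems from means}.
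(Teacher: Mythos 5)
Your reduction to the saturated case is where the real content of the lemma lies, and it is exactly the step you leave unjustified. The phrase ``in a sufficiently saturated model'' (and likewise ``inside a monster model'') is not a licensed move here, because the mean $\mu$ is not part of the structure $M$: in an elementary extension $M^*$ there is no mean at all, so the quantities your induction runs on --- the hypothesis $\mu(W)>0$, the set $S_\mu$, and its $N$-thickness from \cref{l:massicot wagner systems from means} --- simply do not exist there, and none of them transfers automatically ($N$-thickness of the infinite union $T$ is not a first-order condition, and positivity of measure has no meaning in $M^*$). Conversely, you cannot stay inside $M$ either: without saturation the identification of $\ell^{\mathrm{dt}}_{k-1}$ with the restriction of $\ell^{\mathrm{t}}_{k-1}$ may fail, $T$ need not be $\bigvee$-definable, and the finite-subcover extraction of a definable thick $D\subseteq T$ is precisely a compactness argument that $M$ need not support. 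So the crux you correctly identify (definabilisation) is handled only conditionally on a saturation hypothesis that the statement does not grant.

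The paper closes this gap by transferring the mean along with the structure: expand the language by predicates for $A$ and $C$, a sort for the reals, and, for every formula $\varphi(x;\bar y)$ with $x$ in the sort of $G$, a function symbol interpreted as $\bar b\mapsto\mu(\varphi(G;\bar b))$; take an $\aleph_0$-saturated elementary extension of this expansion, and define a mean $\mu^*$ on definable subsets of $G^*$ by taking standard parts. The $\lang$-reduct $M^*$ is then an $\aleph_0$-saturated structure satisfying the same hypotheses, in which $\ell^{\mathrm{dt}}$ coincides with $\ell^{\mathrm{t}}$ on definable sets (the corollary to \cref{l:definability massicot system}), and \cref{l:massicot wagner systems from means} finishes the argument --- your induction plus compactness extraction essentially re-derives that corollary rather than addressing the true obstacle. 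If you add this expansion-and-standard-part step (together with a word on why membership in $\ell^{\mathrm{dt}}_k$ pulls back from $M^*$ to $M$, via the $\bigvee_\emptyset$-definability of the membership conditions in \cref{l:definability massicot system}), your argument becomes the paper's proof.
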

\begin{proof} See \cite[Lemma~2.8]{hrushovski2022amenability}. The idea is that, without loss of generality, we can assume that the structure is $\aleph_0$-saturated. Indeed, we consider the expansion $\lang_{\mu}$ of the language given by adding predicates for $A$ and $C$, a sort to the reals (with its usual ordered field structure) and, for each formula $\varphi(x;\bar{y})$ with $x$ in the sort of $G$, a function symbol $\mu_{\varphi(x;\bar{y})}$ from the sort of $\bar{y}$ to the sort of the reals. We take the $\lang_\mu$-expansion given by $\mu_{\varphi(x;\bar{y})}(\bar{b})=\mu(\varphi(G;\bar{b}))$. We then take an $\aleph_0$-saturated elementary extension $M^*_\mu$. Using the standard part, set $\mu^*(\varphi(G;\bar{b}))=\mathrm{st}\,\mu^*_{\varphi(x;\bar{y})}(\bar{b})$. Take the $\lang$-reduct $M^*$. Now, $M^*$ with $\mu^*$ satisfies the same hypotheses 
and, additionally, is $\aleph_0$-saturated. Since $\ell^{\mathrm{dt}}$ equals $\ell^{\mathrm{t}}$ on definable sets for $\aleph_0$-saturated models, we conclude by \cref{l:massicot wagner systems from means}. 
\end{proof}

Using definable (thick) Massicot-Wagner systems we get the following model-theoretic versions of \cref{l:basic massicot wagner,t:recursive massicot wagner}:
\begin{lem}[Basic Massicot-Wagner]\label{l:basic massicot wagner model theory} Let $G$ be a definable group in $M$. Let $G\curvearrowright E$ be a group acting on a set. Let $\Lambda\subseteq G$ and $\Gamma\subseteq G$ with $\Gamma$ symmetric containing the identity. Let $(E,\mathcal{F},m)$ be a mean space invariant by the action of $G$. Let $\ell=(\ell_k)_{k\in\N}$ be a definable Massicot-Wagner system, $A\in \bigcap \ell_k$ and $B\in \mathcal{F}$. Assume: 
\begin{enumerate}[label={\rm (\alph*)}, wide]
\item $A'B\in \mathcal{F}$ for any $A'\subseteq A$ which is a finite intersection of translates of $A$. 
\item $0<m(B)\leq m(AB)<\infty$. 
\end{enumerate}
Set $S_\Gamma(AB)\coloneqq\{g\in \langle \Gamma\rangle\sth m(gAB\cap AB)>0\}$. Then, for any $n\in\N$, there is a symmetric definable subset containing the identity $D\subseteq \Gamma\cap AA^{-1}$ such that finitely many translates of $D$ cover $\Lambda$ and $D^n\subseteq S_\Gamma(AB)$. 

Furthermore, suppose that $G$, $A$ and $G\setminus \Lambda$ are semipositively definable and $\Gamma$ is semipositively $\bigvee$-definable. Then, $D$ can be taken semipositively definable too. 
\end{lem}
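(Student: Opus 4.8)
The plan is to run the proof of \cref{l:basic massicot wagner} essentially verbatim, changing only the choice of $D$ so that it stays definable.

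First I would carry out the same counting argument with $\varepsilon=1/n$ and $\lambda=\sqrt{1+\varepsilon}$: let $\mathcal{A}$ be the family of finite intersections of translates of $A$ --- each member of $\mathcal{A}$ is definable, being a finite intersection of translates of the definable set $A$ --- and set $f(k)=\inf\{m(A'B):A'\in\mathcal{A}\cap\ell_k\}$. Since $A\in\mathcal{A}\cap\ell_k$ for every $k$, this is well defined and takes values in $[m(B),m(AB)]$, so $f(k)\geq\lambda f(k-1)$ cannot hold for all $k$; pick $k\leq\lfloor\log_\lambda(m(AB)/m(B))\rfloor$ with $f(k)<\lambda f(k-1)$ and then $W\in\mathcal{A}\cap\ell_k$ with $m(WB)<\lambda f(k)$. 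Note that $W\subseteq A$ is definable and $W\in\ell_k$.

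The one new step is the definition of $D$. In \cref{l:basic massicot wagner} one sets $D$ to be the whole (generally non-definable) set $\{g\in\Gamma:gW\cap W\in\ell_{k-1}\text{ or }g^{-1}W\cap W\in\ell_{k-1}\}\cup\{1_G\}$; instead, since $W\in\ell_k$, property $\boldsymbol\ast_{\mathrm{dg}}$ of the definable system provides a definable $D_0\subseteq\{g\in\Gamma:gW\cap W\in\ell_{k-1}\text{ or }g^{-1}W\cap W\in\ell_{k-1}\}$ finitely many translates of which cover $\Lambda$. I would then put $D:=D_0\cup D_0^{-1}\cup\{1_G\}$: this is definable, symmetric, contains $1_G$, is still contained in $\Gamma$ (as $\Gamma$ is symmetric) and in the above set (which is invariant under $g\mapsto g^{-1}$), and finitely many translates of $D\supseteq D_0$ still cover $\Lambda$. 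Finally, every $g\in D$ has $gW\cap W\neq\emptyset$ (trivially for $g=1_G$; otherwise because members of $\ell_{k-1}$ are non-empty and $WW^{-1}$ is symmetric), so $g\in WW^{-1}\subseteq AA^{-1}$, i.e. $D\subseteq\Gamma\cap AA^{-1}$. From here the rest is unchanged: for $g\in D\setminus\{1_G\}$, invariance of $m$ gives $f(k-1)\leq m((gW\cap W)B)$, hence $m(gWB\cap WB)>(1-\varepsilon)m(WB)$, and \cref{f:lemma basic massicot wagner} yields $m(gAB\cap AB)\geq m(gWB\cap WB)>(1-n\varepsilon)m(WB)>0$ for all $g\in D^n$, so $D^n\subseteq S_\Gamma(AB)$.

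For the furthermore-clause I would note that, under the semipositivity hypotheses, the witnessing subsets $D_0$ can be chosen semipositively definable --- this is exactly what the computation in the proof of \cref{l:definability massicot system} gives for $\ell^{\mathrm{dg}}$ --- and that passing from $D_0$ to $D_0\cup D_0^{-1}\cup\{1_G\}$ preserves semipositive definability: $D_0^{-1}$ is obtained by substituting the term $x^{-1}$ into a semipositive formula, $\{1_G\}$ is defined by the atomic formula $x=1_G$, and $\vee$ is permitted. I do not expect a genuine obstacle; the only place requiring care is checking that shrinking $D$ to the definable subset $D_0$ breaks neither conclusion (covering $\Lambda$, and $D^n\subseteq S_\Gamma(AB)$), which works precisely because $D$ is sandwiched between $D_0$ and the original set --- and, for the semipositive statement, pinning down exactly which notion of ``semipositively definable Massicot-Wagner system'' makes the clause literally true for a general $\ell$ rather than only for $\ell^{\mathrm{dg}}$.
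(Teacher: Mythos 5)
Your proposal is correct and is essentially the paper's proof: the paper invokes \cref{l:basic massicot wagner} together with the remark that the $D$ produced there has the form $\{g\in\Gamma: gW\cap W\in\ell_{k-1}\text{ or }g^{-1}W\cap W\in\ell_{k-1}\}\cup\{1_G\}$ for some definable $W\in\ell_k$, and then, exactly as you do, uses $\boldsymbol\ast_{\mathrm{dg}}$ to extract a definable subset covering $\Lambda$ and symmetrizes it inside this (symmetric, $1_G$-containing) set, so your inlining of the counting argument changes nothing of substance. The only point you leave open --- the furthermore-clause for a general definable system $\ell$ --- is settled in the paper by observing that any definable Massicot--Wagner system is contained in $\ell^{\mathrm{dg}}$ (so $A\in\bigcap\ell^{\mathrm{dg}}_k$ and one may replace $\ell$ by $\ell^{\mathrm{dg}}$ without loss of generality), after which \cref{l:definability massicot system} gives the semipositive definability of the witness, as you anticipated.
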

\begin{proof} By \cref{l:basic massicot wagner}, we find $\tilde{D}=\{g\in \Gamma \sth gW\cap W\in \ell_{k-1}\text{ or }g^{-1}W\cap W\in\ell_{k-1}\}\cup\{1_G\}\subseteq \Gamma\cap AA^{-1}$ with $W\in\ell_k$ such that finitely many translates of $\tilde{D}$ cover $\Lambda$ and $\tilde{D}^n\subseteq S_\Gamma(AB)$. Now, in fact, by hypothesis, $W\in\ell_k$ implies that there is a symmetric definable subset $D\subseteq\tilde{D}$ such that finitely many translates of $D$ cover $\Lambda$ and $D^n\subseteq \tilde{D}^n \subseteq S_\Gamma(AB)$.

Furthermore, without loss of generality, we can take $\ell^{\mathrm{dg}}$ in place of $\ell$. Then, by \cref{l:basic massicot wagner}, we find $\tilde{D}=\{g\in \Gamma\sth gW\cap W\in \ell^{\mathrm{dg}}_{k-1}\text{ or }g^{-1}W\cap W\in \ell^{\mathrm{dg}}_{k-1}\}\cup\{1_G\}\subseteq \Gamma\cap AA^{-1}$ with $W\in\ell^{\mathrm{dg}}_k$ such that finitely many translates of $\tilde{D}$ cover $\Lambda$ and $\tilde{D}^n\subseteq S_\Gamma(AB)$. If $G$, $A$ and $G\setminus \Lambda$ are semipositively definable and $\Gamma$ is semipositively $\bigvee$-definable, we get that $\tilde{D}$ is semipositively definable by \cref{l:definability massicot system} --- note that $W$ is a finite intersections of translates of $A$, so it is semipositively definable too. Hence, we can take $D$ semipositively definable.
\end{proof}

\begin{theo}[Recursive Massicot-Wagner] \label{t:recursive massicot wagner model theory} Let $G$ be a definable group in $M$. Let $G\curvearrowright E$ be a group acting on a set and $\Lambda$ a definable approximate subgroup of $G$. Let $(E,\mathcal{F},m)$ be a mean space invariant by the action of $G$. Let $(\ell_k)_{k\in\N}$ be a definable thick Massicot-Wagner system in $\Lambda$. Let $A\in \bigcap \ell_k$, $B\in \mathcal{F}$ and $n\in\N$. Assume:
\begin{enumerate}[label={\rm (\alph*)}, wide]
\item $A'B\in \mathcal{F}$ for any $A'\subseteq A$ which is a finite intersection of translates of $A$. 
\item $0<m(B)\leq m(AB)<\infty$.
\item $S(AB)\coloneqq\{g\in \langle \Lambda\rangle\sth m(gAB\cap AB)>0\}\subseteq \Lambda^n$.
\end{enumerate}
Then, $\Lambda^n$ has a definable locally compact model (using countably many parameters).
\end{theo}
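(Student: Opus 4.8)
The plan is to run the recursion in the proof of \cref{t:recursive massicot wagner}, invoking the definable refinement \cref{l:basic massicot wagner model theory} at every stage so that all the sets produced stay definable, and then to upgrade the conclusion of \cite[Theorem~3.5]{machado2025closed} to a definable model. First I would set $D_0=\Lambda$, which is definable by hypothesis, and carry along the recursion the invariant that $D_i$ is a definable, symmetric subset of $G$ containing the identity which is commensurable with $\Lambda$; by \cref{f:commensurator} this forces $D_i\subseteq\Comm(\Lambda)$, and then \cref{l:symmetric sets of the commensurator} ensures that each $D_i$ is an approximate subgroup (so that its powers are commensurable with it).

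Given such a $D_i$, commensurability of $D_i$ with $\Lambda$ gives, exactly as in \cref{t:recursive massicot wagner}, a $k_i\in\N_{>0}$ with $S_{D_i}(AB)\coloneqq\{g\in\langle D_i\rangle\sth m(gAB\cap AB)>0\}\subseteq D_i^{k_i}$, and $D_i^{k_i}$ is definable because $D_i$ is. The next point to check is that $\ell$ is still a \emph{definable} thick Massicot-Wagner system in $D_i$ relative to $\langle D_i\rangle$: for $W\in\ell_k$, take a definable set $E\subseteq\{g\in\langle\Lambda\rangle\sth gW\cap W\in\ell_{k-1}\text{ or }g^{-1}W\cap W\in\ell_{k-1}\}$ thick in $\Lambda$, and replace it by $E\cap D_i^2$; this is definable (here the definability of $D_i$ is essential), is contained in $\langle D_i\rangle$, lies in the same set of translates, and is thick in $D_i$ by the standard pigeonhole argument from commensurability of $D_i$ with $\Lambda$ (cf.\ \cref{r:hereditary thick}). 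Applying \cref{l:basic massicot wagner model theory} to $D_i$ with $2k_i$ in the role of $n$ then produces a \emph{definable} symmetric set $D_{i+1}\ni 1_G$ with $D_{i+1}\subseteq\langle D_i\rangle\cap AA^{-1}$, finitely many translates of $D_{i+1}$ covering $D_i$, and $D_{i+1}^{2k_i}\subseteq S_{D_i}(AB)\subseteq D_i^{k_i}$. Since $1_G\in D_{i+1}\subseteq D_i^{k_i}$ and $D_i$ is an approximate subgroup, $D_{i+1}$ is commensurable with $D_i$, hence with $\Lambda$, and $D_{i+1}\subseteq\langle D_i\rangle\subseteq\Comm(\Lambda)$; so the invariant is preserved and the recursion continues.

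To conclude, $(D_i^{k_i})_{i\in\N}$ is, as in \cref{t:recursive massicot wagner}, a decreasing sequence of definable approximate subgroups, each contained in and commensurable with $\Lambda^n$, with $(D_{i+1}^{k_{i+1}})^2\subseteq D_i^{k_i}$; by \cite[Theorem~3.5]{machado2025closed} this provides a locally compact model of $\Lambda^n$ with domain $H=\langle\Lambda\rangle$. For definability, gather into a single countable set $A_0$ the finitely many parameters used at each of the (countably many) stages; then $H=\bigcup_m\Lambda^m$ is $\bigvee_{A_0}$-definable, and the decreasing sequence $(D_i^{k_i})$ of $A_0$-definable approximate subgroups yields (by the model-theoretic reformulation of \cite[Theorem~3.5]{machado2025closed} recalled in the introduction) a $\bigwedge_{A_0}$-definable normal subgroup $K\triangleleft H$ of bounded index with $K\subseteq\Lambda^n$, so that the quotient map $H\to\faktor{H^*}{K^*}$ with the logic topology is an $A_0$-definable locally compact model of $\Lambda^n$. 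The main obstacle is the second paragraph: ensuring that \cref{l:basic massicot wagner model theory} genuinely applies at every stage, i.e.\ that $\ell$ cut down to $D_i$ is again a \emph{definable} thick Massicot-Wagner system (which is why it matters that $D_i$ is definable and why the thick witness must be intersected with the definable set $D_i^2$) and that the set $D_{i+1}$ it returns is again definable, symmetric and commensurable with $\Lambda$, so that the induction can be iterated; the bookkeeping of parameters is then automatic, since each step adds only finitely many.
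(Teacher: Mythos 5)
Your proposal follows essentially the same route as the paper: run the recursion of \cref{t:recursive massicot wagner} but invoke the definable basic lemma (\cref{l:basic massicot wagner model theory}) at each stage so that the sets $D_i$ remain definable, and then convert the resulting definable decreasing sequence $(D_i^{k_i})_{i\in\N}$ into a definable locally compact model over the countably many accumulated parameters; your extra bookkeeping (thickness of the witnesses intersected with $D_i^2$, commensurability and approximate-subgroup structure of the $D_i$ via \cref{f:commensurator} and \cref{l:symmetric sets of the commensurator}) is sound and matches the paper's intent. The one point to tighten is the final step: $K=\bigcap_i D_i^{k_i}$ is an $A_0$-type-definable bounded-index subgroup of $\langle\Lambda\rangle$ contained in $\Lambda^n$ but is not obviously normal, so rather than asserting a normal $K$ outright one should, as the paper does, pass to $\langle\Lambda\rangle^{00}_{A_0}$ (the smallest $A_0$-type-definable subgroup of bounded index, which is normal and contained in $K$) and take the quotient with the logic topology.
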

\begin{proof} Applying \cref{l:basic massicot wagner} recursively as in \cref{t:recursive massicot wagner}, we find a sequence $(D_i)_{i\in\N}$ of commensurable symmetric definable subsets of $\Lambda^n$ containing the identity such that $D^2_{i+1}\subseteq D_i$. Taking $K=\bigcap D_i$ we find a $\bigwedge_{<\omega}$-definable subgroup of $\Gamma=\langle\Lambda\rangle$ contained in $\Lambda^n$ of bounded index in $\langle \Lambda\rangle$. As usual, taking the quotient by $\Gamma^{00}$ with the logic topology (using countably many parameters), we get the definable locally compact model $\faktor{\Gamma}{\Gamma^{00}}$. 
\end{proof}
\begin{proof}[Proof of the model-theoretic remark of \cref{t:recursive massicot wagner mean}]
It is a particular case of \cref{t:recursive massicot wagner model theory}, applying \cref{l:definable massicot wagner systems from means}.
\end{proof}

\bibliographystyle{alphaurl}
\bibliography{A_short_note_on_the_Massicot-Wagner_method.bib}
\end{document}